\newtheorem{thm}{Theorem}[section]
\newtheorem{lem}[thm]{Lemma}
\theoremstyle{remark}
\newtheorem{rem}[thm]{Remark}
\numberwithin{equation}{section}
\newcommand{\bR}{{\mathbb R}}
\newcommand{\bN}{{\mathbb N}}
\newcommand{\A}{{\mathcal A}}
\newcommand{\Q}{{\mathbb{ Q}}}
\newcommand{\bfa}{{\mathbf a}}
\newcommand{\bfb}{{\mathbf b}}
\def\updimB{{\overline{\dim}_{\rm M}}}
\def\lowdimB{{\underline{\dim}_{\rm M}}}
\def\dimB{\dim_{\rm M}}
\def\vep{\varepsilon}
\def\N{\mathcal{N}}
\newcommand{\card}{{\rm card\,}}
\begin{document}

\title{Representations of rational numbers and Minkowski dimension}

\author{Haipeng Chen}
\address{School of Artificial Intelligence, Shenzhen Technology University, Shenzhen 518118}
\email{hpchen0703@foxmail.com}

\author{Lai Jiang$^*$}
\address{School of Fundamental Physics and Mathematical Sciences, Hangzhou Institute for Advanced Study, UCAS, Hangzhou 310024}
\email{jianglai@ucas.ac.cn}
\thanks{$^*$Corresponding author}

\author{Yufeng Wu}
\address{School of Mathematics and Statistics, HNP-LAMA, Central South University, Changsha 410083}
\email{yufengwu.wu@gmail.com}


\subjclass[2020]{Primary 28A80; Secondary 11A55, 11A67.}

\date{}

\keywords{Continued fraction, Egyptian fraction, Engel fraction, Minkowski dimension.}

\begin{abstract}

In this paper, we investigate the representations of rational numbers via continued fraction, Egyptian fraction, and Engel fraction expansions. Given $m \in \mathbb{N}$, denote by $C_m, E_m, E_m^*$ the sets of rational numbers whose continued fraction, Egyptian fraction, and Engel fraction expansions have length $m$, respectively. We first establish the Minkowski dimensions of these sets, which implies that their global scaling properties are different. We also apply the results to sumsets of decreasing sequences.

\end{abstract}

\maketitle

\section{Introduction}

Approximating real numbers by rational numbers is a fundamental topic in number theory, and there is a large body of literature devoted to studying the approximation properties of real numbers by various types of representations of rational numbers. 
A central theme is the use of continued fractions. Beyond these, other expansions, such as $\beta$-expansions, Egyptian fractions, Engel expansions, and Sylvester expansions, have also been extensively studied.
Furthermore, the study of fractal sets arising from these expansions, particularly the determination of their Hausdorff and Minkowski dimensions, has become an important and fruitful research direction. 
For details, see \cite{DK2002,EW2011,FT2021,GL2025,LW2001} and references therein.

Various expansions of rational numbers are indispensable not only for representing and studying real numbers, but also for investigating the dimensional properties of fractal sets in number theory. In this paper, we study the size of some subsets of rational numbers under specific representations and apply the results to study the arithmetic properties of discrete sets and other related problems. Note that rational numbers are countable, thus for any subset of rational numbers, the Hausdorff dimension is always zero, whereas the Minkowski dimension can be non-trivial. Therefore, we study the size of subsets of rational numbers through the Minkowski dimension. For any non-empty bounded set $\Omega \subset \bR$, we denote by $\N_r(\Omega)$ the smallest number of closed intervals of length $r$ needed to cover $\Omega$.
The {\it upper and lower Minkowski dimensions} of $\Omega$ are given by
$$
	\updimB \Omega =\varlimsup_{ r \to 0^+} \frac{ \log \N_r(\Omega)}{-\log r},
	\qquad 
	\lowdimB \Omega =\varliminf_{ r \to 0^+} \frac{ \log \N_r(\Omega)}{-\log r}.
$$
If the upper and lower Minkowski dimensions coincide, we call the common value the \emph{Minkowski dimension}. 
It is often convenient to use the following equivalent definition of the Minkowski dimension. Given $\delta>0$, let $V_{\delta}(\Omega)$ denote the $\delta$-neighhourhood of $\Omega$, i.e., $V_{\delta}(\Omega)=\{x\in \mathbb{R}: |x-y|\leq \delta, y \in \Omega \}$. Then
\begin{equation}\label{eq:dim-def}
\updimB \Omega =1-\varliminf_{\delta\to 0^+}\frac{\log \mathcal{L}(V_{\delta}(\Omega))}{\log\delta}, \quad \lowdimB \Omega =1-\varlimsup_{\delta\to 0^+}\frac{\log \mathcal{L}(V_{\delta}(\Omega))}{\log\delta},
\end{equation} 
where $\mathcal{L}$ denotes the Lebesgue measure on $\mathbb{R}$.
For more properties and applications of Minkowski dimension, we refer to \cite{F1990}. 

Next we introduce continued fractions, Engel fractions,  Egyptian fractions and present our dimensional results for these expansions.

\subsection{Continued fractions}
A continued fraction expresses a real number as an integer plus the reciprocal of another number, repeated recursively. The Continued Fraction Algorithm is conveniently expressed via the Gauss transformation $ T : [0, 1) \to [0, 1) $, which is defined by 
$$
T(0) := 0, \quad T(x) := \frac{1}{x} \pmod{1}, \quad \text{for } x \in (0, 1).
$$
Then every $ x \in (0,1) $ has a continued fraction expansion
$$
x =\cfrac{1}{a_1(x) + \cfrac{1}{a_2(x) + \cfrac{1}{a_3(x) + \ddots}}},
$$
where $ a_1(x) = \lfloor 1/x \rfloor $ and $a_n(x) = a_1(T^{n - 1}x)$ for $ n \geq 2 $. 
We write for short $x=[ a_1, a_2, a_3, \ldots]$. It follows that if $x$ is a rational number, then the above algorithm terminates and so there exists $m \in \bN$ such that $$x=[ a_1, a_2, \ldots, a_m].$$ 
In this case, we call $[a_1, a_2,  \ldots, a_m]$ the continued fraction of $x$ with length $m$. Throughout, $\mathbb{N}$ denotes the set of positive integers.
Note that the above algorithm ensures that the continued fraction of $x$ is uniquely determined, with the last term $a_m \geq 2$.
For more background on the continued fraction expansion of real numbers, see \cite{K1964}.

For any $m\in\mathbb{N}$, we set 
$$
C_m=\left\{x\in (0,1) \cap\Q: x\text{ has a continued fraction of length }m\right\}.
$$
The following theorem gives the Minkowski dimension of $C_m$.

\begin{thm}\label{THM_CF}
For any $m\in\mathbb{N}$, we have 
$$\dimB C_m = \frac{1}{2} .$$
\end{thm}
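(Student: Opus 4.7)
The plan is to prove $\updimB C_m \leq 1/2$ and $\lowdimB C_m \geq 1/2$ by induction on $m$, exploiting the recursive structure of continued fractions. For $a\in\bN$, write $\phi_a(y) := 1/(a+y)$, so that $[a_1,\ldots,a_m] = \phi_{a_1}\circ\cdots\circ\phi_{a_m}(0)$ and
\[
C_m \;=\; \bigcup_{a_1\geq 1} \phi_{a_1}(C_{m-1}) \qquad \text{for } m\geq 2,
\]
while $C_1 = \{1/a : a \geq 2\}$.

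For the base case $m=1$, a direct count gives $\dimB C_1 = 1/2$: the gap $1/a - 1/(a+1) \asymp 1/a^2$ exceeds $r$ precisely when $a \lesssim 1/\sqrt r$, yielding $\asymp 1/\sqrt r$ mutually $r$-separated points for the lower bound, while the tail $\{1/a : a > 1/\sqrt r\}$ lies in $[0,\sqrt r]$ and is covered by $\lesssim 1/\sqrt r$ further intervals of length $r$. For the inductive upper bound, I note that $\phi_{a_1}$ is a diffeomorphism of $[0,1]$ with $|\phi'_{a_1}(y)|\leq 1/a_1^2$, hence $\N_r(\phi_{a_1}(C_{m-1})) \leq \N_{r a_1^2}(C_{m-1})$. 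Assuming inductively that, for every $\vep>0$, $\N_r(C_{m-1}) \lesssim_\vep r^{-1/2-\vep}$, I split the union at $a_1 \asymp 1/\sqrt r$: for $a_1 \geq c/\sqrt r$ the images $\phi_{a_1}(C_{m-1})$ all lie in $[0, c\sqrt r]$ and are collectively covered by $O(1/\sqrt r)$ intervals; for $a_1 < c/\sqrt r$ the inductive hypothesis yields
\[
\sum_{a_1 < c/\sqrt r} \N_{r a_1^2}(C_{m-1}) \;\lesssim_\vep\; r^{-1/2-\vep}\sum_{a_1 \geq 1} a_1^{-1-2\vep} \;\lesssim_\vep\; r^{-1/2-\vep}.
\]
Letting $\vep\to 0^+$ gives $\updimB C_m \leq 1/2$.

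For the matching lower bound, the subset $S_m := \{[1,1,\ldots,1,a_m] : a_m\geq 2\} = \phi_1^{m-1}(C_1)$ lies inside $C_m$ and is the image of $C_1$ under the smooth diffeomorphism $\phi_1^{m-1}$ of $[0,1]$, whose derivative is bounded above and bounded away from zero on $[0,1]$ for fixed $m$; since bi-Lipschitz maps preserve Minkowski dimension, $\dimB S_m = \dimB C_1 = 1/2$, forcing $\lowdimB C_m \geq 1/2$. The main obstacle is the induction step for the upper bound: the naive estimate summed over $a_1 < 1/\sqrt r$ produces $\sum a_1^{-1}$, which diverges logarithmically, so one must carry an $\vep$-loss through the induction in order that $\sum a_1^{-1-2\vep}$ converges uniformly in $r$, and the clustering of images near $0$ must be handled collectively as a single tail rather than cylinder-by-cylinder.
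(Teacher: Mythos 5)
Your proof is correct, but it follows a genuinely different route from the paper's. For the lower bound, you exhibit the explicit subset $\phi_1^{m-1}(C_1) = \{[1,\ldots,1,a_m]:a_m\geq 2\}\subset C_m$ and use that $\phi_1^{m-1}$ is bi-Lipschitz on $[0,1]$ for fixed $m$; the paper instead observes that the closure of $C_m$ already contains $F=\{0\}\cup\{1/n\}_{n\geq 1}$ (by fixing $a_1=n$ and sending $a_2,\ldots,a_m\to\infty$), and invokes $\dimB F=1/2$ together with the fact that closure does not change Minkowski dimension. Both arguments are short; yours produces a concrete bi-Lipschitz copy of $C_1$, while the paper's avoids the bi-Lipschitz bookkeeping. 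For the upper bound, your inductive scheme $C_m=\bigcup_{a_1}\phi_{a_1}(C_{m-1})$ with the threshold $a_1\asymp r^{-1/2}$ and the $\varepsilon$-loss to make $\sum a_1^{-1-2\varepsilon}$ converge is sound and gives $\updimB C_m\leq 1/2+\varepsilon$ for all $\varepsilon>0$. The paper's argument is structurally similar (it also peels off one coordinate at a time, splitting the digits $t$ into $t\leq\theta$ and $t>\theta$ where $\theta$ plays the role of your $1/\sqrt r$), but it tracks a quantitative weight $\theta_{m,n}(\bfa)=n^m/\pi(\bfa)$ and replaces your $\varepsilon$-loss by a polylogarithmic factor, obtaining $\N_{n^{-2m}}(C_m)\leq(2m\log n+8)^m n^m$; this is slightly sharper than $r^{-1/2-\varepsilon}$, though for computing Minkowski dimension the two estimates are equally adequate. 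A cosmetic slip: for $a_1\geq c/\sqrt r$ the images lie in $(0,\sqrt r/c]$, not $[0,c\sqrt r]$; the conclusion $O(1/\sqrt r)$ covering intervals is unaffected.
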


\begin{rem}\label{rem1-2}
Since the Minkowski dimension is finite stable \cite{F1990}, it follows from Theorem \ref{THM_CF} that for any $m\in\mathbb{N}$, 
$$
\dimB \bigg( \bigcup_{p=1}^m C_p \bigg) = \frac{1}{2} .
$$
However, note that $(0,1) \cap \Q = \cup_{p=1}^\infty C_p$. Thus 
$
\dimB \bigcup_{p=1}^\infty C_p = 1 .
$
\end{rem}

\subsection{Egyptian fractions}

An Egyptian fraction expresses a positive rational number as a sum of distinct unit fractions. By a unit fraction we mean a rational number of the form $1/k$ for some $k \in \mathbb{N}$.  Given a rational number $x\in(0,1]$, there may be multiple ways to express $x$ as a sum of distinct unit fractions. In this paper, we consider the Egyptian fraction expansion generated by the following greedy algorithm. Let $x$ be a rational number in $(0,1]$. The {\it Egyptian fraction} of $x$ is denoted by
$$
x= \frac{1}{a_1} + \dots + \frac{1}{a_k},
$$
where $a_1< a_2 < \dots < a_k$ are positive integers that satisfy $$
\sum_{i=1}^{\ell} \frac{1}{a_i} \leq x < \sum_{i=1}^{\ell-1} \frac{1}{a_i}+\frac{1}{a_{\ell}-1},\quad 1\leq \ell\leq k.
$$
We call $k$ the length of the Egyptian fraction of $x$. For more information on Egyptian fractions, see \cite{BE2022, L2024}.

For any $ m \in \mathbb{N}$, let 
$$
E_m   =\big\{ x\in (0,1] \cap \Q:   x \mbox{ has an Egyptian fraction of length } m \big\}.
$$
Moreover, we stipulate that $x=0$ has an Egyptian fraction of length $0$ and define $E_0=\{0\}$. The set of rational numbers that have Egyptian fractions of length at most $m $ is defined by
\begin{align*}
A_m = \bigcup_{p=0}^m E_p.
\end{align*}

We determine the Minkowski dimensions of $E_m$ and $A_m$ as follows.

\begin{thm}\label{THM_EGF}
For any $m \in \mathbb{N}$, we have 
$$\dimB E_m = \dimB A_m = 1- \frac{1}{ 2^m} .$$
\end{thm}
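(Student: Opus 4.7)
The plan is to exploit the recursive structure that the greedy algorithm induces on the Egyptian fraction digits and to prove matching upper and lower bounds for $\dimB E_m$ by induction on $m$; the assertion for $A_m$ will then follow from finite stability of the Minkowski dimension. The key structural observation, immediate from the definition of the greedy algorithm, is that for every integer $a \geq 2$,
\begin{equation*}
E_m \cap \bigl[\tfrac{1}{a},\, \tfrac{1}{a-1}\bigr) \;=\; \tfrac{1}{a} \;+\; \bigl( E_{m-1} \cap (0,\, \tfrac{1}{a(a-1)}) \bigr),
\end{equation*}
since $a_1 = \lceil 1/x\rceil$ equals $a$ exactly on this interval, and for such $x$ the greedy expansion of $x - 1/a$ produces the remaining digits $(a_2, \dots, a_m)$. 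The base case $m = 1$ is the classical computation $\dimB \{1/n : n \in \bN\} = 1/2 = 1 - 2^{-1}$.

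For the upper bound I would prove, by induction on $m$, the stronger localized estimate
\begin{equation*}
\N_r\bigl( E_m \cap (0,\, 1/N] \bigr) \;\lesssim\; \min\!\bigl( r^{-(1-2^{-m})},\ (Nr)^{-1} \bigr)
\end{equation*}
uniformly in $N \geq 1$, the special case $N = 1$ giving $\updimB E_m \leq 1 - 2^{-m}$. The recursive identity reduces the left-hand side to a sum over integers $a \geq N$ of $\N_r\bigl(E_{m-1} \cap (0, 1/(a(a-1)))\bigr)$. When $a \gtrsim r^{-1/2}$ the host interval $[1/a, 1/(a-1))$ is shorter than $r$, so the trivial area bound controls the tail by $(Nr)^{-1}$; in the complementary range, the inductive hypothesis at scale $N' \asymp a^2$ gives a per-term bound $\min(r^{-(1-2^{-(m-1)})}, (a^2 r)^{-1})$, whose internal crossover sits at $a \asymp r^{-1/2^m}$. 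A direct summation shows that the contributions on both sides of this crossover are of order $r^{-(1-2^{-m})}$, matching the exponent claimed. This bookkeeping is the main technical difficulty: the localized estimate must be exactly sharp so that no polynomial factor accumulates over the $m$ inductive rounds.

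For the lower bound I would exhibit an $r$-separated subset of $E_m$ of cardinality $\gtrsim r^{-(1-2^{-m})}$. Fix a large constant $K$, set $N := \lfloor c\, r^{-1/2^m}\rfloor$ for a small $c = c(m,K) > 0$, and recursively define $N_1 := N$ and $N_{i+1} := K N_i^2$, so that $N_i \asymp N^{2^{i-1}}$. Consider all sequences $(a_1, \dots, a_m)$ with $a_i \in [N_i, 2N_i]$. For $K$ sufficiently large the growth condition forces $\sum_{j > i} 1/a_j < 1/(a_i(a_i - 1))$ for every $i$, so the sequence coincides with the genuine greedy expansion of $x := \sum_i 1/a_i$, placing $x$ in $E_m$. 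There are $\prod_i N_i \asymp N^{2^m - 1} \asymp r^{-(1-2^{-m})}$ such sums, and if two distinct sequences first disagree at index $j$, then $|1/a_j - 1/a_j'| \geq 1/(a_j a_j') \gtrsim N_j^{-2}$ dominates the tail difference by a factor bounded away from $1$ (again by the choice of $K$), forcing a separation $\gtrsim N_m^{-2} \gtrsim r$ by the definition of $N$. Finally, since $A_m = \bigcup_{p=0}^m E_p$ is a finite union and each $E_p$ has Minkowski dimension $1 - 2^{-p}$, finite stability yields $\dimB A_m = \max_{0 \leq p \leq m}\dimB E_p = 1 - 2^{-m}$, completing the proof.
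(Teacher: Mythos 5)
Your proposal is correct, and it takes a genuinely different route from the paper on both bounds. For the \emph{upper bound}, the paper deliberately bounds the larger set $F_m \supset A_m \supset E_m$ (the $m$-fold arithmetic sum of $\{1/n\}\cup\{0\}$) by introducing ``EGF-good words'' $\bfa = a_1\cdots a_k$ with $a_i \leq n^{2^{i-1}}$: these generate a global cover of $F_m$ at scale $n^{-2^m}$, and a direct count of good words (Lemma~\ref{lem:3-card}) gives the exponent. This has the bonus of proving $\dimB F_m = 1 - 2^{-m}$ (Theorem~\ref{THM_AS}) simultaneously. Your approach instead exploits the greedy recursion $E_m \cap [1/a, 1/(a-1)) = \tfrac1a + E_{m-1}\cap[0,\tfrac{1}{a(a-1)})$ and propagates a localized covering estimate $\N_r(E_m \cap (0,1/N]) \lesssim \min(r^{-(1-2^{-m})},(Nr)^{-1})$ by induction on $m$; the doubling $N' \asymp a^2$ in the recursion is exactly what produces the exponents $2^{-m}$. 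This is tailored to $E_m$ and does not directly bound $F_m$, but the bookkeeping (crossover at $a \asymp r^{-2^{-m}}$, three summation ranges) does close up to the sharp exponent, as you outline. For the \emph{lower bound}, the paper runs a greedy approximation argument (Lemma~\ref{lem:3-low-gap}) to show $E_m$ is $n^{-2^m}$-dense in $(0,1/n)$, then uses the $\delta$-neighborhood characterization \eqref{eq:dim-def}; you instead exhibit an explicit $r$-separated subset of cardinality $\gtrsim r^{-(1-2^{-m})}$ by choosing digits $a_i \in [N_i, 2N_i]$ with $N_{i+1} = K N_i^2$ and verifying the greedy admissibility condition $\sum_{j>i} 1/a_j < 1/(a_i(a_i-1))$. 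Both yield the same bound, but the paper's density argument is shorter and does not require tracking the separation constants. One small notational point: as written, your recursion at $m=1$ should read $E_{m-1} \cap [0,\tfrac{1}{a(a-1)})$ rather than an open interval, so that $E_0 = \{0\}$ contributes the singleton $\{1/a\}$; since you treat $m=1$ separately as a base case this does not affect the argument.
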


In our proof of Theorem~\ref{THM_EGF}, we relate the problem to the question of determining the Minkowski dimension of the iterated arithmetic sums of a certain sequence. More precisely, let 
$$F=\{1/n\}_{n=1}^{\infty}\cup\{0\}.$$ We define the $m$-fold arithmetic sum of $F$ by 
\begin{align}\label{eq:fm-def}
	F_m = \underbrace{  F + \cdots + F}_{m  \ \text{times}}:=\{a_1+\cdots+a_m: a_i\in F, 1\leq i\leq m\}.
\end{align}
 It is clear that 
$$
E_m\subset A_m\subset F_m.
$$
 Theorem~\ref{THM_EGF} is then obtained by proving that the lower Minkowski dimension of $E_m$ is at least $1-1/2^m$ and the upper Minkowski dimension of $F_m$ is at most $1-1/2^m$. This also yields the following result.

\begin{thm}\label{THM_AS}
For any $m \in \mathbb{N}$, we have 
$$\dimB F_m = 1 - \frac{1}{2^m} .$$
\end{thm}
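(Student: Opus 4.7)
The plan is to bound $F_m$ from above and below separately; together with $E_m \subset F_m$, both bounds essentially reuse the two intermediate estimates that the paper announces for the proof of Theorem~\ref{THM_EGF}, so Theorem~\ref{THM_AS} is a direct consequence of that work. Concretely, once one has $\updimB F_m \leq 1 - 1/2^m$ (the upper bound exploited for $A_m$) and $\lowdimB E_m \geq 1 - 1/2^m$ (the lower bound established for $E_m$), monotonicity of the lower Minkowski dimension applied to $E_m \subset F_m$ yields $\lowdimB F_m \geq \lowdimB E_m \geq 1 - 1/2^m$, and combining with the upper bound gives $\dimB F_m = 1 - 1/2^m$.

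For the upper bound I would argue by induction on $m$. The base case $m=1$ is the classical $\dimB F = 1/2$, i.e.\ $\N_\delta(F) \lesssim \delta^{-1/2}$. For the inductive step, fix a threshold $t \in (0, m)$ and decompose $F_m = (F_m \cap [0, t]) \cup (F_m \cap [t, m])$. For any $y \in F_m \cap [t, m]$, pick a representation $y = \sum_{i=1}^m 1/n_i$ and set $n = \min_i n_i$; then $1/n \geq y/m \geq t/m$, so $n \leq m/t$, and $y - 1/n \in F_{m-1}$. Hence
$$
F_m \cap [t, m] \subset \bigcup_{n=1}^{\lfloor m/t \rfloor} \bigl( 1/n + F_{m-1} \bigr),
$$
which gives $\N_\delta(F_m \cap [t, m]) \leq (m/t)\, \N_\delta(F_{m-1})$ by translation invariance, while $\N_\delta(F_m \cap [0, t]) \leq \lceil t/\delta \rceil$. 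Substituting the inductive hypothesis $\N_\delta(F_{m-1}) \lesssim \delta^{-(1 - 1/2^{m-1})}$ and balancing the two contributions by setting $t = \delta^{1/2^m}$ yields $\N_\delta(F_m) \lesssim \delta^{-(1 - 1/2^m)}$, closing the induction.

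For the lower bound I would exploit the greedy Egyptian expansion. Every $y \in (0, 1]$ admits a representation $y = 1/a_1 + \cdots + 1/a_k + r_k$ with $a_{i+1} = \lceil 1/r_i \rceil$, and one checks inductively that $a_{i+1} > a_i(a_i - 1)$, whence $a_m > a_1^{2^{m-1}}$ and $r_m < 1/a_m^2 \leq 1/a_1^{2^m}$. The truncation $\sum_{i=1}^m 1/a_i$ lies in $F_m$ and approximates $y$ within $1/a_1^{2^m}$. Therefore, given $\delta > 0$, every $y \in (0, \delta^{1/2^m})$ has $a_1(y) \geq \delta^{-1/2^m}$ and hence $\mathrm{dist}(y, F_m) < \delta$. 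Consequently $V_\delta(F_m) \supset (0, \delta^{1/2^m})$, so $\mathcal{L}(V_\delta(F_m)) \geq \delta^{1/2^m}$, and \eqref{eq:dim-def} gives $\lowdimB F_m \geq 1 - 1/2^m$.

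The main obstacle is extracting the sharp exponent $1/2^m$ in the upper bound. A more na\"ive induction based on the identity $V_\delta(F_m) = V_\delta(F_{m-1}) + F$ together with a split $F = \{1/n\}_{n \leq N} \cup \{1/n\}_{n > N}$ produces the recursion $s_m = s_{m-1}^2/(1 + s_{m-1})$ for the measure exponent $\mathcal{L}(V_\delta(F_m)) \lesssim \delta^{s_m}$, which makes $1/s_m$ grow doubly exponentially in $m$ and is strictly weaker than $s_m = 1/2^m$. Decomposing instead by the \emph{largest} summand $1/\min_i n_i$ (rather than peeling off one factor of $F$ at a time) is what captures the doubly-halving rate dictated by the doubly exponential growth $a_{i+1} \gtrsim a_i^2$ of the greedy Egyptian denominators, and so matches the lower bound on the nose.
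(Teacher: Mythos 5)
Your overall architecture---upper bound for $\updimB F_m$, lower bound for $\lowdimB E_m$, combined via $E_m \subset F_m$---is exactly the paper's strategy (announced after Theorem~\ref{THM_EGF} and executed in Lemmas~\ref{lem:3-low-box} and \ref{lem:3-up-box}). The direct lower-bound argument you sketch for $F_m$ is the same greedy-truncation idea as the paper's Lemma~\ref{lem:3-low-gap}, but it contains a small slip: from $a_{i+1} > a_i(a_i-1)$ one cannot conclude $a_m > a_1^{2^{m-1}}$ nor $r_m < a_1^{-2^m}$ (take $a_1 = 2$: then $a_2$ may equal $3 < 4 = a_1^2$). The clean bookkeeping, as in the paper, fixes $n$ with $x \in (0,1/n)$ so that $a_1 \geq n+1$, and proves by induction that the remainder after $j$ steps is $< n^{-2^j}$; carrying a factor $a_1-1$ or $n$ rather than $a_1$ is what makes the induction close. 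This is easily repaired and, in any case, you may simply cite the paper's Lemma~\ref{lem:3-low-box}.

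The upper-bound argument, on the other hand, is a genuinely different route from the paper's. The paper covers $F_m$ at a single scale $n^{-2^m}$ by cylinders $I_{m,n}(\bfa)$ indexed over ``EGF-good words'' $\bfa = a_1\cdots a_k$ with $a_i \leq n^{2^{i-1}}$, bounds the number of such words at level $k$ by $n^{2^k-1}$, shows these cylinders cover $F_m$, and sums over $0\leq k\leq m$; there is no induction on $m$. Your argument is instead an induction on $m$: split $F_m$ at a threshold $t$, cover $F_m\cap[0,t]$ trivially by $\lceil t/\delta\rceil$ intervals, decompose $F_m\cap[t,m]$ by its largest summand $1/n$ (forcing $n \leq m/t$) into at most $m/t$ translates of $F_{m-1}$, and optimize $t=\delta^{1/2^m}$. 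This closes the induction with $\N_\delta(F_m) \leq C_m\,\delta^{-(1-1/2^m)}$ and makes the exponent recursion $s_m = (1+s_{m-1})/2$ completely transparent. Both proofs reach the sharp exponent. Yours is shorter and more conceptual; the paper's single-scale cylinder count, with the $a_i \leq n^{2^{i-1}}$ cutoffs baked into the definition of good words, is a bit more mechanical but adapts directly to the variants mentioned in the remark after Theorem~\ref{THM_AS} (e.g.\ $H^\alpha_m$), where the cutoffs change but the template does not. Modulo the small slip in the lower bound noted above, your proposal is correct.
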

\begin{rem}
Our method to prove Theorem \ref{THM_AS} can also be applied to determine the Minkowski dimensions of the iterated arithmetic sums of some other decreasing sequences. For instance, for $\alpha>0$, define $H^{\alpha}=\{\frac{1}{n^{\alpha}}: n\in\mathbb{N}\}\cup \{0\}$. Let $H^{\alpha}_m=H^{\alpha}+\cdots+H^{\alpha}$ denote the $m$-fold arithmetic sum of $H^{\alpha}$. We then have 
$$\dimB H^{\alpha}_m=1-\left(\frac{\alpha}{1+\alpha}\right)^m.$$
\end{rem}

\subsection{Engel fractions}

Engel expansions and Egyptian fractions both represent rational numbers as sums of distinct unit fractions. The Engel expansion is a special type of Egyptian fraction, where the denominators are successive products of an increasing sequence of integers, and the representation is unique under its construction rules. Besides, Engel expansion can also be viewed as an ascending variant of a continued fraction. For details, see \cite{KW2004} and references therein. 

The Engel expansion of a positive real number $x$ is the unique non-decreasing sequence of positive integers $(a_1,a_2,a_3,\ldots)$ such that
$$
x = \frac{1}{a_1}+\frac{1}{a_1a_2}+\frac{1}{a_1a_2a_3}+\cdots.
$$ 
Rational numbers have a finite Engel expansion, whereas irrational numbers have an infinite one. We call the Engel expansion of a rational number an {\it Engel fraction}.

For $m\in\mathbb{N}$, let 
$$
	E_m^*=\big\{x\in  (0,1] \cap \Q : x\text{ has an Engel fraction of length }m\big\}.
$$
We also denote 
$$
	A_m^*=\big\{x\in (0,1] \cap \Q: x\text{ has an Engel fraction of length at most }  m\big\}.
$$
Clearly, 
$$A_m^*=\bigcup_{p=1}^m E_p^*.$$

In view of Theorem \ref{THM_EGF}, it is natural to investigate the Minkowski dimensions of $E_m^*$ and $A_m^*$.
We achieve this in the following result.

\begin{thm}\label{THM_ENF}
For any $m \in \mathbb{N}$, we have 
$$\dimB E_m^* = \dimB A_m^* = \frac{m}{m+1} .$$
\end{thm}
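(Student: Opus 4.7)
Since $E_m^*\subset A_m^*$, it suffices to prove $\updimB A_m^*\leq m/(m+1)$ and $\lowdimB E_m^*\geq m/(m+1)$. Both bounds rest on the recursive structure obtained by factoring out the first Engel digit: every $x\in A_m^*$ with $x<1$ admits a decomposition $x=(1+y)/a_1$ with $a_1\geq 2$, where $y$ is an Engel sum of length $\leq m-1$ whose first Engel digit is $\geq a_1$. Writing
$$A_{m-1}^*(a_1):=\big\{y\in A_{m-1}^*\cup\{0\}:\text{the first Engel digit of }y\text{ is }\geq a_1\big\}\subset\big[0,\tfrac{1}{a_1-1}\big],$$
the non-decreasing digit constraint forces the image sets $(1+A_{m-1}^*(a_1))/a_1\subset[1/a_1,1/(a_1-1))$ to be pairwise disjoint for distinct $a_1$. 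I will work with the measure formulation of Minkowski dimension in \eqref{eq:dim-def}.

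\textbf{Upper bound.} The plan is to prove $\mathcal{L}(V_\delta(A_m^*))\leq C_m\,\delta^{1/(m+1)}$ by induction on $m$; the base case $m=1$ reduces to the classical estimate for $\{1/n\}_{n\geq 1}$. For the inductive step, the affine identity $\mathcal{L}(V_\delta((1+S)/a_1))=(1/a_1)\mathcal{L}(V_{a_1\delta}(S))$ together with subadditivity gives
$$\mathcal{L}(V_\delta(A_m^*))\leq 2\delta+\sum_{a_1\geq 2}\frac{1}{a_1}\mathcal{L}\big(V_{a_1\delta}(A_{m-1}^*(a_1))\big).$$
Fixing $K:=\lfloor\delta^{-1/(m+1)}\rfloor$, for $a_1\leq K$ I will invoke the induction hypothesis to bound each summand by $C_{m-1}a_1^{1/m-1}\delta^{1/m}$, summing to $\lesssim\delta^{1/m}K^{1/m}=\delta^{1/(m+1)}$. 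For $a_1>K$ I will use the crucial disjointness observation that $\bigcup_{a_1>K}(1+A_{m-1}^*(a_1))/a_1\subset(0,1/K]$, contributing at most $1/K+2\delta\lesssim\delta^{1/(m+1)}$.

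\textbf{Lower bound.} For each large integer $N$, consider
$$S_N:=\Big\{\tfrac{1}{a_1}+\tfrac{1}{a_1a_2}+\cdots+\tfrac{1}{a_1\cdots a_m}:N\leq a_1\leq a_2\leq\cdots\leq a_m\leq 2N\Big\}\subset E_m^*.$$
Uniqueness of the Engel expansion gives $|S_N|=\binom{N+m-1}{m}\asymp N^m$. To estimate the minimum gap, for distinct tuples $a,b$ let $j$ be the first coordinate where they disagree and assume $a_j<b_j$. Factoring out the common prefix $P=a_1\cdots a_{j-1}\leq(2N)^{j-1}$ yields $x(a)-x(b)=(y_a-y_b)/P$ where $y_a,y_b$ are Engel sums with first digits $a_j<b_j$. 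Combining the lower bound $y_a\geq 1/a_j$ with the strict upper bound $y_b<1/(b_j-1)$ for finite Engel sums, sharpened using $y_a>1/a_j$ when $j<m$, will give $|y_a-y_b|\gtrsim N^{-(m-j+2)}$ and hence $|x(a)-x(b)|\gtrsim N^{-(m+1)}$. Taking $\delta\asymp N^{-(m+1)}$ then yields $\N_\delta(E_m^*)\geq|S_N|\gtrsim\delta^{-m/(m+1)}$ for every sufficiently small $\delta$, so $\lowdimB E_m^*\geq m/(m+1)$.

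\textbf{Main obstacle.} The most delicate step is the inductive upper bound: the sum $\sum_{a_1}\frac{1}{a_1}\mathcal{L}(V_{a_1\delta}(A_{m-1}^*(a_1)))$ does not converge absolutely if each summand is bounded naively, and the estimate only closes after splitting at the threshold $K\asymp\delta^{-1/(m+1)}$ and replacing the tail term-by-term sum by the single disjointness bound $\mathcal{L}((0,1/K])=1/K$. The spacing argument in $S_N$ is more elementary but does require the strict inequality $y<1/(c-1)$ for finite Engel sums with first digit $c$, a feature that distinguishes the rational case from the infinite (irrational) Engel expansion.
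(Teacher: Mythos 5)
Your proposal is correct but takes a genuinely different route on both halves. For the lower bound, the paper shows $E_m^*$ is $n^{-m-1}$-dense in $(0,1/n)$ via a greedy approximation scheme (Lemma~\ref{lem:4-low-gap}), then reads off the dimension from the Lebesgue measure of neighborhoods; you instead construct the explicit family $S_N$ with digits confined to $[N,2N]$, count $\asymp N^m$ points, and prove the minimum gap is $\gtrsim N^{-(m+1)}$ using the recursive identity $x(a)-x(b)=(y_a-y_b)/P$ plus the strict finiteness bound $y_b<1/(b_j-1)$ and (for $j<m$) the positive remainder $y_a-1/a_j\geq 1/(a_ja_{j+1})$. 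Your spacing estimate is sound (the case $j=m$ gives $\asymp N^{-(m+1)}$ and the cases $j<m$ give $\gtrsim (2N)^{-(j+1)}\geq (2N)^{-m}$, so the minimum is $\gtrsim N^{-(m+1)}$), and the injectivity of the Engel coding on non-decreasing words with $a_1\geq 2$ guarantees $S_N\subset E_m^*$ with the stated cardinality. For the upper bound, the paper covers $A_m^*$ by cylinders indexed by ``ENF-good words'' and proves a recursive counting estimate (Lemma~\ref{lem:key-4}); you instead use the affine self-similarity $A_m^*\cap(1/a_1,1/(a_1-1))=(1+A_{m-1}^*(a_1))/a_1$, the scaling identity for neighborhood volumes, and a cutoff at $K\asymp \delta^{-1/(m+1)}$ to split the sum into a head controlled by the induction hypothesis (with exponent arithmetic $1/m - 1/(m(m+1)) = 1/(m+1)$ closing the induction) and a tail controlled by the single interval $(0,1/K]$. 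Both mechanisms exploit the same geometric fact — that increasing the first digit compresses everything into shorter intervals — but the paper packages it as a word-by-word counting bound at scale $n^{-m-1}$, while yours packages it as a measure recursion. Your version is somewhat cleaner conceptually and makes the self-similar structure explicit, at the cost of needing the threshold trick to make the sum converge; the paper's version avoids that issue by always working at a fixed mesh $n^{-m-1}$.

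One small point worth making explicit in a write-up: for the head of your inductive sum you apply the hypothesis to $A_{m-1}^*(a_1)\subset A_{m-1}^*\cup\{0\}$, which adds a harmless $O(a_1\delta)$ per term and hence $O(K\delta)=O(\delta^{m/(m+1)})\lesssim \delta^{1/(m+1)}$ to the total; and the base case $m=1$ is exactly the classical $\dimB\{1/n\}=1/2$ computation. Neither affects correctness, but both should be stated.
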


\begin{rem}
It is worth comparing Theorems \ref{THM_ENF},  \ref{THM_EGF}, and \ref{THM_CF}. In contrast to Theorem~\ref{THM_CF},  (see~Remark~\ref{rem1-2}), we have
$$
\lim_{m \to \infty} \dimB E_m = \lim_{m \to \infty} \dimB E_m^* = 1.
$$
However, it is evident from the dimension formulas that the Minkowski dimensions of $E_m$ and $E^*_m$ approach $1$ at different rates as $m \to \infty$.
\end{rem}

To conclude this section, we compare our work with some related literature. Several works have been devoted to calculating the Minkowski dimensions of sequences of real numbers that decrease to 0 (see \cite{KA1993, MZ1999, YLL2024, ZM2005}). In contrast, the sets considered in this paper are countable but not decreasing sequences. In fact, the sets in Theorems \ref{THM_CF}, \ref{THM_EGF}, and \ref{THM_ENF} all possess infinitely many accumulation points, which is very different from the objects studied in the aforementioned literature.

\subsection{Organisation of the paper} 
The remainder of the paper is organized as follows. We prove Theorem \ref{THM_CF} in Section \ref{S2}.
Then we give the proofs of Theorems \ref{THM_EGF}-\ref{THM_AS} in Section \ref{S3}, and study the Engel fractions and Theorem \ref{THM_ENF} in Section \ref{S4}.

\section{Proof of Theorem \ref{THM_CF}: Continued fractions}\label{S2}

In this section, we study rational numbers in the framework of continued fractions. 
We begin by establishing the symbolic coding for this framework.
For each $n \in \bN$, let
\[\mathcal{A}_n=\{a_1 \dots a_n:~ a_i\in \mathbb{N},1\leq i\leq n\}.\]
be the collection of all words of length $n$ over the alphabet $\mathbb{N}$.
Let 
$$
S_n = \{ a_1 \dots a_n\in\mathcal{A}_n:~ a_n\geq 2\}.
$$
That is, $S_n$ consists of words of length $n$ with the last term being at least $2$.
Set 
$$
	\mathcal{A}_*=\bigcup_{n=0}^\infty \mathcal{A}_n \quad \mbox{ and } \quad S_* = \bigcup_{n=0}^\infty S_n.
$$
Here we define $\mathcal{A}_0=S_0=\{ \vartheta \}$, where $\vartheta$ denotes the empty word.

For any $\bfa=a_1 a_2 \cdots a_k \in S_*$, denote 
$$
\bfa^-  = a_1 a_2 \cdots a_{k-1} (a_k-1),
 \qquad \hat{\bfa}  = a_1 a_2 \cdots a_{k-1}.
$$
We also define $\pi: \mathcal{A}_*  \to \mathbb{N}$ by
$$
\pi(\bfa) = a_1 \times a_2 \times \dots \times a_k
$$
and $g: \mathcal{A}_* \to \bR$ by
$$
	g(\bfa)=\cfrac{1}{a_1 
         + \cfrac{1}{a_2
         + \cfrac{1}{\ddots
		   + \cfrac{1}{a_k}}}}  .
$$
Here we adopt the convention that $\pi(\vartheta)=1$ and $g(\vartheta)=0$.
It is evident that 
$$C_m=g(S_m) , \quad \forall m \in \bN.$$

We first discuss the lower bound of the Minkowski dimension of $C_m$. 
Throughout, we let $\lfloor\cdot \rfloor$ and $\lceil\cdot \rceil$ denote the floor and ceiling function, respectively.

\begin{lem}\label{lem:2-low-box}
For any $m \in \mathbb{N}$, we have 
$$
	\lowdimB C_m \geq \frac{1}{2}.
$$

\end{lem}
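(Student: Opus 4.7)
The plan is to exhibit, at each sufficiently small scale $r > 0$, a collection of at least $\sim r^{-1/2}$ points in $C_m$ that are pairwise separated by a distance strictly greater than $r$. Since each closed interval of length $r$ contains at most one such point, this forces $\N_r(C_m) \geq c\,r^{-1/2}$ for some constant $c = c(m) > 0$, and then $\lowdimB C_m \geq 1/2$ follows directly from the definition of the lower Minkowski dimension.

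For the construction, I would freeze the tail of the continued fraction word at $2$'s and vary only the leading digit. Set $\gamma := g(2\,2\cdots 2) \in [0,1)$, the value on the length $(m-1)$ word of $2$'s (with the convention $\gamma := 0$ when $m=1$); this is a constant depending only on $m$. For every integer $a \geq 2$, the word $a\,2\,2\cdots 2$ has its last letter equal to $2$ and therefore lies in $S_m$; a direct computation gives
\[
g(a\,2\cdots 2) = \frac{1}{a + \gamma}, \qquad g(a\,2\cdots 2) - g((a{+}1)\,2\cdots 2) = \frac{1}{(a+\gamma)(a+1+\gamma)}.
\]

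Given small $r > 0$, I would take $N = \lfloor r^{-1/2}/2 \rfloor$ and consider the $N$ points $x_a := g(a\,2\cdots 2)$ for $a \in \{N, N+1, \ldots, 2N-1\}$, all of which lie in $C_m$. The gap formula above shows that consecutive gaps satisfy $x_a - x_{a+1} > 1/(2N)^2 \geq r$ for every $r$ small enough, so the $N$ points are strictly $r$-separated. Consequently $\N_r(C_m) \geq N$, and passing to the limit $r \to 0^+$ yields $\lowdimB C_m \geq 1/2$, as required.

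I do not foresee any substantive obstacle here: the argument reduces to a short explicit calculation with one family of contracting Möbius maps. The only points that need a bit of care are the degenerate case $m=1$ (where $\gamma = 0$, so the restriction $a \geq 2$ must be imposed by hand, in agreement with the condition $a_m \geq 2$ defining $S_1$), and the choice of numerical constants so that the separation inequality $x_a - x_{a+1} > r$ is genuinely strict and uniform in small $r$.
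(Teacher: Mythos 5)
Your proof is correct, but it takes a route that differs from the paper's in a meaningful way. The paper argues via the closure: it fixes $n$ and considers words $n\,K\,K\cdots K$ with $K\to\infty$, showing that $g$ of such a word tends to $1/n$; this gives $\overline{C_m}\supset F=\{0\}\cup\{1/n\}_{n\geq 1}$, and the lemma then follows at once from the two black-box facts that $\dimB F=1/2$ and that Minkowski dimension is unchanged by taking closures. You instead freeze the tail at $2$'s, observe that $g(a\,2\cdots 2)=1/(a+\gamma)$ with $\gamma=g(2\cdots 2)$ a constant of $m$, and count directly: picking $a\in\{N,\dots,2N-1\}$ with $N\approx r^{-1/2}/2$ produces an $r$-separated set of size $N$ inside $C_m$ itself, giving $\N_r(C_m)\gtrsim r^{-1/2}$. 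Your construction is dual to the paper's (they vary the tail with the head fixed; you vary the head with the tail fixed), and your points lie genuinely in $C_m$ rather than only in its closure. The trade-off is that your argument is more self-contained and elementary — it uses no facts about $F$ or closure-invariance — at the mild cost of tracking a couple of explicit constants to make the separation inequality $x_a-x_{a+1}>r$ uniform in small $r$. Both are short and clean; yours would also be acceptable in the paper.

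One small cosmetic remark: for $m\geq 2$ the membership $a\,2\cdots 2\in S_m$ holds for \emph{every} $a\geq 1$ (the constraint $a_m\geq 2$ concerns the last letter, which is $2$), so the restriction $a\geq 2$ is only genuinely needed when $m=1$; your range $a\geq N$ of course handles both cases automatically.
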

\begin{proof}
Let $m\in\mathbb{N}$. For any  $n \in \mathbb{N}$ and $\vep>0$, define the word $\mathbf{a}$ by $a_1=n$ and $a_2 = \cdots=a_m=\lceil 2\vep^{-1} \rceil$. Then one can verify that 
$|g(\mathbf{a})-1/n|<\vep$. This implies that
the closure of $C_m$ contains $F=\{0\}\cup\{1/n\}_{n=1}^{\infty}$, which is well-known to have Minkowski dimension $1/2$ \cite{F1990}.
Hence, 
$$
	\lowdimB C_m = \lowdimB \overline{C_m} \geq   \dim_{\rm M} F =\frac{1}{2}.
$$
\end{proof}

Now we turn to the upper bound for the Minkowski dimension of $C_m$. 
We first establish some elementary properties of continued fractions.

\begin{lem}\label{lem:2-gap-next}
For any $k\geq 1$ and $\bfa \in S_k$, we have 
$$
	0<|g(\bfa)-g(\hat{\bfa})|\leq\frac{1}{\pi(\bfa) \pi(\hat{\bfa})}.
$$

\end{lem}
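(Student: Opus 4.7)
The plan is to induct on $k$, exploiting the recursive identity $g(a_1 a_2 \cdots a_n) = 1/(a_1 + g(a_2 \cdots a_n))$, which is immediate from the definition of $g$. The base case $k=1$ is a direct check: $\hat{\bfa} = \vartheta$, so $g(\hat{\bfa}) = 0$ and $g(\bfa) = 1/a_1$, giving $|g(\bfa) - g(\hat{\bfa})| = 1/a_1$, while $\pi(\bfa)\pi(\hat{\bfa}) = a_1 \cdot 1 = a_1$. Thus equality in the upper bound and strict positivity both hold.

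For the inductive step with $k \geq 2$, I would use the recursion to rewrite
$$
g(\bfa) - g(\hat{\bfa}) \;=\; \frac{1}{a_1 + g(a_2\cdots a_k)} - \frac{1}{a_1 + g(a_2\cdots a_{k-1})} \;=\; \frac{g(a_2\cdots a_{k-1}) - g(a_2\cdots a_k)}{\bigl(a_1 + g(a_2\cdots a_k)\bigr)\bigl(a_1 + g(a_2\cdots a_{k-1})\bigr)}.
$$
Because $a_k \geq 2$, the truncated word $a_2\cdots a_k$ lies in $S_{k-1}$, and its hat is $a_2\cdots a_{k-1}$; the induction hypothesis therefore applies and tells me that the numerator is nonzero and bounded in absolute value by $1/\bigl(\pi(a_2\cdots a_k)\,\pi(a_2\cdots a_{k-1})\bigr)$. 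Since every value of $g$ is nonnegative, the denominator is at least $a_1^2$. Multiplying these two estimates gives exactly the bound $1/(\pi(\bfa)\pi(\hat{\bfa}))$, with strict inequality preserved.

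I do not anticipate any real obstacle here: the lemma is essentially a repackaging of the classical fact that two consecutive continued-fraction convergents $p_n/q_n$ and $p_{n-1}/q_{n-1}$ differ by $1/(q_n q_{n-1})$, combined with the elementary bound $q_n \geq a_1 a_2 \cdots a_n$ obtained from the recursion $q_n = a_n q_{n-1} + q_{n-2}$. The direct induction sketched above avoids having to introduce the convergent machinery explicitly, which keeps the proof short and entirely inside the notation already fixed in Section~\ref{S2}.
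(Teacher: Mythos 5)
Your proof is correct and follows essentially the same path as the paper's: both rewrite $g(\bfa)-g(\hat{\bfa})$ via the recursion $g(a_1 a_2\cdots a_n)=1/(a_1+g(a_2\cdots a_n))$, bound the resulting denominator below by $a_1^2$, and iterate down to the one-letter word. The paper unrolls the recursion directly whereas you phrase it as a formal induction on $k$ applied to the tail word $a_2\cdots a_k\in S_{k-1}$, but the underlying computation and estimates are identical.
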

\begin{proof}
The first inequality is clear. The second inequality is also obvious when $k=1$. For $k\geq 2$ and $\bfa=a_1 a_2 \cdots a_k \in S_k$, let $\bfb = a_2 \cdots a_k$. Then we see that
\begin{align*}
	\big|g(\bfa)-g(\hat{\bfa}) \big|
	=\frac { \big| ( a_1 +g(\bfb ))-  (a_1 +g(\hat{\bfb}) )\big|}
	{  \big(a_1 +g(\bfb ) \big)  \big(a_1 +g(\hat{\bfb}) \big)}
	< \frac{1}{(a_1)^2} \Big| g(\bfb ) -g(\hat{\bfb}  ) \Big|.
\end{align*}
Inductively, we have
$$
	\big| g(\bfa )-g(\hat{\bfa}) \big| 
	< \frac{1}{(a_1)^2} \frac{1}{(a_2)^2} \cdots  \frac{1}{(a_{k-1})^2}  \Big| g(a_k)- g(\vartheta)\Big|
	=\frac{1}{\pi(\hat{\bfa})^2 a_k}	=\frac{1}{\pi(\bfa)\pi(\hat{\bfa})}.
$$
\end{proof}

The following lemma is a consequence of \cite[Theorem 4]{K1964}.
\begin{lem}\label{lem:2-odd-even}
Let $k \in \mathbb{N}$, $\bfa \in S_k$ and $\bfb \in S_*$ be a non-empty word. If $k $ is odd, then we have 
$g(\hat{\bfa})< g(\bfa \bfb) < g(\bfa)< g(\bfa^-) $.
If $k $ is even, then we have
$g(\bfa^-)  < g(\bfa)< g(\bfa \bfb)<g(\hat{\bfa})$.

\end{lem}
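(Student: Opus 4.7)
The plan is to prove the lemma by induction on $k$. The key tool will be the recursion $g(a_1\bfc)=1/(a_1+g(\bfc))$ together with the fact that $t\mapsto 1/(a_1+t)$ is strictly decreasing on $t>-a_1$. Prepending a letter $a_1$ therefore reverses inequalities among the $g$-values while simultaneously swapping the parity of the length, which is exactly the alternating pattern in the statement and reflects the classical ``alternating convergents'' phenomenon encoded in \cite[Theorem 4]{K1964}.

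For the base case $k=1$, I would write $\bfa=a_1$ with $a_1\geq 2$, so that $g(\hat{\bfa})=g(\vartheta)=0$, $g(\bfa)=1/a_1$, $g(\bfa^-)=1/(a_1-1)$, and $g(\bfa\bfb)=1/(a_1+g(\bfb))$. Since $\bfb$ is a non-empty word over $\bN$, the recursion immediately gives $g(\bfb)>0$. Combining with $a_1-1<a_1<a_1+g(\bfb)$ and inverting yields
$$
g(\hat{\bfa})=0 \;<\; g(\bfa\bfb)=\frac{1}{a_1+g(\bfb)} \;<\; g(\bfa)=\frac{1}{a_1} \;<\; g(\bfa^-)=\frac{1}{a_1-1},
$$
which is exactly the ordering asserted in the odd case $k=1$.

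For the inductive step with $k\geq 2$, I would factor $\bfa=a_1\bfa'$ with $\bfa'=a_2\cdots a_k\in S_{k-1}$, and check directly that $\hat{\bfa}=a_1\hat{\bfa'}$, $\bfa^-=a_1(\bfa')^-$, and $\bfa\bfb=a_1(\bfa'\bfb)$. By the recursion, each of $g(\hat{\bfa})$, $g(\bfa)$, $g(\bfa^-)$, $g(\bfa\bfb)$ is the image under the map $t\mapsto 1/(a_1+t)$ of the corresponding quantity for $\bfa'$. The inductive hypothesis applied to $\bfa'\in S_{k-1}$ with the same extension $\bfb$ supplies the chain of inequalities appropriate to the parity of $k-1$; applying the strictly decreasing map $t\mapsto 1/(a_1+t)$ then reverses this chain and gives the required ordering for $\bfa$, whose parity is opposite to that of $\bfa'$. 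The proof is essentially routine, so I do not expect a substantive obstacle; the one piece of bookkeeping that demands care is verifying that the parity swap under prepending a letter lines up correctly with the two cases of the statement, but this is immediate from the base case together with the fact that reversal by the decreasing map carries ``odd-case ordering'' to ``even-case ordering'' and vice versa.
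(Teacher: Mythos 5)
The paper offers no proof at all: it simply states that the lemma ``is a consequence of \cite[Theorem 4]{K1964},'' i.e.\ it defers to Khinchin's classical alternating-convergents result. Your argument is a self-contained alternative: the base case $k=1$ is verified directly, and the inductive step exploits the recursion $g(a_1\mathbf{c})=1/(a_1+g(\mathbf{c}))$ (valid also for $\mathbf{c}=\vartheta$ since $g(\vartheta)=0$), together with the observation that $\hat{\bfa}=a_1\hat{\bfa'}$, $\bfa^-=a_1(\bfa')^-$, $\bfa\bfb=a_1(\bfa'\bfb)$ and that $t\mapsto 1/(a_1+t)$ is strictly decreasing on $t\geq 0$, which reverses the four-term chain while flipping parity. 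I checked the bookkeeping: $\bfa'=a_2\cdots a_k$ indeed lies in $S_{k-1}$ (since $a_k\geq 2$), all $g$-values involved are nonnegative so the monotonicity of the reciprocal map applies, and the parity bookkeeping matches the two cases exactly. Your proof is correct; compared with the paper it trades a one-line citation for a short explicit induction, which is arguably preferable for a reader who does not have Khinchin at hand, at the cost of a few extra lines in the text.
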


In this section, we call a word $\bfa$ a {\it continued-fraction-good word} ({\it CF-good word} in short) with respect to $(m,n)$, if $\pi(\bfa) \leq n^m$.
In this case, we define
$$
	\theta_{m,n}(\bfa)= \frac{n^m}{\pi(\bfa)}.
$$

Given $0\leq k\leq m$ and a word $\bfa=a_1 \cdots a_k \in S_k$, 
we denote by $T_{m,\bfa}$
 the set of all words of length $m$ having $\bfa$ as a prefix. That is,
 $$
	T_{m,\bfa}=\{  \bfb =b_1 b_2 \cdots b_m\in S_m:  \ b_i=a_i  \mbox{ for } 1 \leq i \leq k     \}.
$$
Then we set
$$
	Y_{m,\bfa}
	=\{ g(\bfb): \bfb \in T_{m,\bfa}  \}.
$$

\begin{lem}\label{cor:tmp-2.8}
Let $m,n\in \mathbb{N}$. Let  
$\bfa$ be a CF-good word with respect to $(m,n)$ of length $k$, where $0\leq k<m$. 
Then for any integer $ \ell >\theta_{m,n}(\bfa)$ and any $\bfb \in T_{m, \bfa \ell}$, we have
$$
	\big|g(\bfb) -  g(\bfa) \big| \leq \frac{\theta_{m,n}(\bfa)}{n^{2m}}  .
$$
\end{lem}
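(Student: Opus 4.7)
The plan is to reduce the bound on $|g(\bfb)-g(\bfa)|$ to the one-step gap $|g(\bfc)-g(\bfa)|$, where $\bfc = \bfa\ell$ is the prefix of $\bfb$ of length $k+1$. First I would observe that $\bfc \in S_{k+1}$: since $\bfa$ is CF-good with respect to $(m,n)$, we have $\pi(\bfa)\leq n^m$, so $\theta_{m,n}(\bfa)\geq 1$, and therefore the integer $\ell$ with $\ell>\theta_{m,n}(\bfa)$ must satisfy $\ell\geq 2$. In particular, $\bfc$ is a legitimate word with last term at least $2$, and $\hat{\bfc}=\bfa$.

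Next I would estimate $|g(\bfc)-g(\bfa)|$ by applying Lemma~\ref{lem:2-gap-next} directly to $\bfc$, which gives
\[
|g(\bfc)-g(\bfa)| = |g(\bfc)-g(\hat{\bfc})| \leq \frac{1}{\pi(\bfc)\pi(\hat{\bfc})} = \frac{1}{\ell\,\pi(\bfa)^2}.
\]
Using the hypothesis $\ell>\theta_{m,n}(\bfa)=n^m/\pi(\bfa)$, this bound becomes at most $\frac{1}{n^m\pi(\bfa)}=\frac{\theta_{m,n}(\bfa)}{n^{2m}}$, which is exactly what we want for the case $m=k+1$ (where $\bfb=\bfc$).

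For the general case $m>k+1$, I would write $\bfb=\bfc\bfd$ with $\bfd$ a non-empty word, and invoke Lemma~\ref{lem:2-odd-even} applied to the word $\bfc$ of length $k+1$. That lemma tells us that, regardless of the parity of $k+1$, the value $g(\bfb)=g(\bfc\bfd)$ lies strictly between $g(\hat{\bfc})=g(\bfa)$ and $g(\bfc)$. Consequently
\[
|g(\bfb)-g(\bfa)| \leq |g(\bfc)-g(\bfa)| \leq \frac{\theta_{m,n}(\bfa)}{n^{2m}},
\]
completing the proof.

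There is no real obstacle here; the argument is entirely elementary once one has Lemmas~\ref{lem:2-gap-next} and~\ref{lem:2-odd-even} in hand. The only mild point to verify is that $\ell\geq 2$ so that $\bfc$ is an admissible word, and this follows immediately from the CF-good hypothesis $\pi(\bfa)\leq n^m$.
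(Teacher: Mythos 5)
Your proof is correct and uses essentially the same approach as the paper: reduce to the one-step gap via Lemma~\ref{lem:2-odd-even}, then bound that gap with Lemma~\ref{lem:2-gap-next} and feed in the hypothesis $\ell>\theta_{m,n}(\bfa)$. The only cosmetic difference is that the paper first compares $g(\bfa\ell)$ to $g(\bfa\alpha)$ with $\alpha=\lceil\theta_{m,n}(\bfa)\rceil$ before applying Lemma~\ref{lem:2-gap-next}, whereas you apply it directly to $\bfa\ell$, which is marginally cleaner.
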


\begin{proof}
Let $\alpha=\lceil \theta_{m,n}(\bfa)  \rceil $.
From Lemma~\ref{lem:2-odd-even} we see that
$$
	\big|g(\bfa) - g(\bfb ) \big| \leq \big|g(\bfa) - g(\bfa \ell ) \big| \leq   \big|g(\bfa) - g(\bfa \alpha ) \big| .
$$
Then by Lemma~\ref{lem:2-gap-next},
\begin{align*}
	|g(\bfa) - g(\bfa \alpha ) |
	 \leq \frac{1}{\pi(\bfa) \pi(\bfa \alpha)} 
	=  \frac{\big( \theta_{m,n}(\bfa) \big)^2  }{n^m \cdot n^m  \alpha}
	\leq \frac{ \theta_{m,n}(\bfa)  }{n^{2m}}.
\end{align*}
The proof is complete.
\end{proof}

We now estimate the covering number of $C_m$ as follows.

\begin{lem}\label{lem:star-2}
Let $m,n\in\mathbb{N}$. Then for any CF-good word $\bfa $ respect to $(m,n)$ of length $m-k$, where $0\leq k\leq m$, we have
\begin{equation}\label{eq:count-up-2-1}
	\N_{n^{-2m}}(Y_{m,\bfa}) \leq \big( 2 m \log n+8 \big)^{k}{\theta_{m,n}(\bfa)}.
\end{equation}
\end{lem}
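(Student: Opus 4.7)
The plan is to prove the lemma by induction on $k$, the number of letters that remain to be chosen after the prefix $\bfa$.

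For the base case $k = 0$, the word $\bfa$ itself has length $m$, so $T_{m,\bfa}$ contains at most one word and $Y_{m,\bfa}$ contains at most one point. Hence $\N_{n^{-2m}}(Y_{m,\bfa}) \leq 1 \leq \theta_{m,n}(\bfa)$, where the last inequality uses that $\bfa$ is CF-good and thus $\theta_{m,n}(\bfa) \geq 1$.

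For the inductive step from $k-1$ to $k$ (with $1 \leq k \leq m$), I would partition $T_{m,\bfa}$ according to the value $\ell$ of the $(m-k+1)$-th letter, writing $T_{m,\bfa} = \bigcup_{\ell \geq 1} T_{m,\bfa\ell}$. The crucial dichotomy is that $\bfa\ell$ is CF-good with respect to $(m,n)$ precisely when $\ell \leq \theta_{m,n}(\bfa)$, because $\pi(\bfa\ell) = \ell\,\pi(\bfa)$. In the CF-good range $1 \leq \ell \leq \lfloor \theta_{m,n}(\bfa) \rfloor$, the inductive hypothesis applies to $\bfa\ell$ (of length $m-(k-1)$) with $\theta_{m,n}(\bfa\ell) = \theta_{m,n}(\bfa)/\ell$, giving
$$\N_{n^{-2m}}(Y_{m,\bfa\ell}) \leq (2m\log n + 8)^{k-1}\,\frac{\theta_{m,n}(\bfa)}{\ell}.$$
Summing over $\ell$ and bounding the resulting harmonic sum by $1 + \log \theta_{m,n}(\bfa) \leq 1 + m\log n$ contributes at most $(2m\log n + 8)^{k-1}\theta_{m,n}(\bfa)(1 + m\log n)$ to the covering number.

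In the remaining range $\ell > \theta_{m,n}(\bfa)$, Lemma \ref{cor:tmp-2.8} guarantees that every $g(\bfb)$ with $\bfb \in T_{m,\bfa\ell}$ lies within distance $\theta_{m,n}(\bfa)/n^{2m}$ of $g(\bfa)$, so the union of the corresponding $Y_{m,\bfa\ell}$ sits inside an interval of length $2\theta_{m,n}(\bfa)/n^{2m}$ and is covered by at most $\lceil 2\theta_{m,n}(\bfa) \rceil \leq 3\theta_{m,n}(\bfa)$ intervals of length $n^{-2m}$ (using $\theta_{m,n}(\bfa) \geq 1$). Adding the two contributions and using $(2m\log n + 8)^{k-1} \geq 1$ produces the bound $(2m\log n + 8)^{k-1}\theta_{m,n}(\bfa)(m\log n + 4) \leq (2m\log n + 8)^k \theta_{m,n}(\bfa)$, which closes the induction.

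The main technical hurdle is the bookkeeping of constants in this final estimate: the factor $2m\log n + 8$ gained at each inductive step must simultaneously absorb the logarithmic loss $1 + m\log n$ from the harmonic sum in the CF-good range and the additive $O(\theta_{m,n}(\bfa))$ loss from Lemma \ref{cor:tmp-2.8} in the remaining range. The constant $8$ is chosen just large enough to guarantee $m\log n + 4 \leq 2m\log n + 8$ for all $m, n \geq 1$, which is precisely what makes the induction close cleanly.
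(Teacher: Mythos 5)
Your proof is correct and follows essentially the same approach as the paper: induction on $k$, splitting the next letter $\ell$ into the CF-good range $\ell \leq \theta_{m,n}(\bfa)$ (handled by the inductive hypothesis and a harmonic-sum estimate) and the tail $\ell > \theta_{m,n}(\bfa)$ (handled by Lemma~\ref{cor:tmp-2.8}), then absorbing both contributions into a single factor of $2m\log n + 8$. The only differences from the paper are cosmetic choices of constants in bounding the tail contribution (you use $\lceil 2\theta\rceil \leq 3\theta$; the paper uses $2\lceil\theta\rceil$), and both close the induction identically.
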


\begin{proof}
We conduct induction on $k$. When $k=0$, \eqref{eq:count-up-2-1} clearly holds since in this case $Y_{m,\bfa}=\{g(\bfa)\}$ is a singleton. Assume that \eqref{eq:count-up-2-1} holds for $k=k_0-1$, where $1\leq k_0\leq m$.
We are going to prove \eqref{eq:count-up-2-1} for $k=k_0$.
Let $\bfa$ be  a CF-good word of length $m-k_0$. Let $\theta=\theta_{m,n}(\bfa)  =\frac{n^m}{\pi(\bfa)}$.
We divide $Y_{m,\bfa}$ into two parts as follows: 
$$
	Y_{m,\bfa}
	=\bigcup_{t=1 }^\infty Y_{m, \bfa t} 
	= \bigg( \bigcup_{t> \theta } Y_{m, \bfa t} \bigg) \cup \bigg(\bigcup_{t \leq  \theta } Y_{m, \bfa t} \bigg)
	:=A \cup B.
$$
Then Lemma~\ref{cor:tmp-2.8} yields that $A\subset [g(\mathbf{a})-\theta/ n^{2m}, g(\mathbf{a})+\theta/ n^{2m} ] $ and so $\N_{n^{-2m}}(A)\leq 2\lceil \theta\rceil$. Below we estimate $\N_{n^{-2m}}(B)$.

Let $\eta= 2 m \log n+8$.
Notice that for each $t\in\mathbb{N}$ with $1 \leq t \leq \theta$, $\bfa t$ is a CF-good word with respect to $(m,n)$ of length $m-(k_0-1)$.
So by the induction hypothesis, we have
$$
	\N_{n^{-2m}}(Y_{m,\bfa t}) 
	\leq \eta^{k_0-1}{\theta_{m,n}(\bfa t)}
	=  \eta^{k_0-1}    \theta / t .
$$
Therefore,
\[\N_{n^{-2m}}(B)\leq \sum_{t=1}^{\lfloor\theta\rfloor} \N_{n^{-2m}}( Y_{m,\bfa t} ) 
	 \leq  \sum_{t=1}^{\lfloor\theta\rfloor}  \frac{ \eta^{k_0-1}    \theta }{ t}
	 \leq\eta^{k_0-1}    \theta(1+ \log \theta). \]
Since $\theta =n^m /\pi(\bfa) \leq  n^m$,  we have $1 +\log \theta \leq 1+m \log n \leq \eta/2$.
Thus,
$$
	\N_{n^{-2m}}(Y_{m,\bfa})  \leq \N_{n^{-2m}}(A) +\N_{n^{-2m}}(B)  
	\leq  2\lceil\theta\rceil+  \eta^{k_0} \theta/2 \leq \eta^{k_0} \theta.
$$
This completes the inductive step and thus the proof of the lemma.
\end{proof}

\begin{lem}\label{lem:2-up-box}
We have $\updimB C_m \leq 1/2$ for all $m \in \mathbb{N}$.
\end{lem}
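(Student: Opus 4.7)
The plan is to apply Lemma~\ref{lem:star-2} with the empty word $\bfa = \vartheta$ and $k = m$. Observe that $\pi(\vartheta) = 1 \leq n^m$ for every $n,m \in \mathbb{N}$, so $\vartheta$ qualifies as a CF-good word with respect to $(m,n)$, with length $0 = m - m$ and $\theta_{m,n}(\vartheta) = n^m$. By construction, $T_{m,\vartheta} = S_m$, hence $Y_{m,\vartheta} = g(S_m) = C_m$. Lemma~\ref{lem:star-2} then yields
\[
    \N_{n^{-2m}}(C_m) \leq (2m \log n + 8)^{m} \cdot n^{m}
\]
for every $n \in \mathbb{N}$.

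Next I would pass this covering bound into the definition of the upper Minkowski dimension. Along the sequence $r_n = n^{-2m}$, one has
\[
    \frac{\log \N_{r_n}(C_m)}{-\log r_n}
    \leq \frac{m \log(2m \log n + 8) + m \log n}{2m \log n}
    = \frac{1}{2} + \frac{\log(2m \log n + 8)}{2 \log n},
\]
and the second term tends to $0$ as $n \to \infty$. To get the full limsup over arbitrary $r \to 0^+$, I would use the standard monotonicity of $\N_r$: for any sufficiently small $r$, choose $n$ with $(n+1)^{-2m} \leq r < n^{-2m}$. Then $\N_r(C_m) \leq \N_{(n+1)^{-2m}}(C_m)$ while $-\log r > 2m \log n$, so
\[
    \frac{\log \N_r(C_m)}{-\log r}
    \leq \frac{m \log(2m \log(n+1) + 8) + m \log(n+1)}{2m \log n},
\]
and the right side still tends to $1/2$ as $n \to \infty$. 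Taking the limit supremum gives $\updimB C_m \leq 1/2$, completing the proof.

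No substantive obstacle remains since Lemma~\ref{lem:star-2} has been established; the argument reduces to specializing that lemma and doing a routine logarithmic estimate. The only point to treat carefully is the passage from the discrete sequence $r_n = n^{-2m}$ to arbitrary $r \to 0^+$, which is handled by the monotonicity of $\N_r$ as above.
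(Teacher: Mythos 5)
Your proposal is correct and follows essentially the same route as the paper: specialize Lemma~\ref{lem:star-2} to $\bfa=\vartheta$, $k=m$ to get $\N_{n^{-2m}}(C_m)\leq(2m\log n+8)^m n^m$, and then pass from the discrete sequence of radii $n^{-2m}$ to arbitrary $r\to 0^+$ by sandwiching $r$ between consecutive terms and using monotonicity of $\N_r$. The only cosmetic difference is the choice of bracketing (you use $(n+1)^{-2m}\leq r<n^{-2m}$ while the paper uses $n^{-2m}\leq r<(n-1)^{-2m}$), which makes no substantive difference.
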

\begin{proof}
Fix $m \in \mathbb{N}$. Let $n\in \mathbb{N}, n\geq 2$. Notice that $Y_{m,\vartheta}=C_m$ and $\theta_{m,n}(\vartheta)=n^m$. Then by  Lemma \ref{lem:star-2} (in which we take $k=m$),  we have 
\begin{align}
	\N_{n^{-2m}} (C_m)\leq \big( 2 m \log n+8 \big)^{m}n^m.\label{eqN2mCm}
\end{align}
For $r\in (0,1)$, let $n\geq 2$ such that $n^{-2m}\leq r<(n-1)^{-2m}$. Then it follows from \eqref{eqN2mCm} that 
$$
	\updimB C_m=\varlimsup_{r\to 0^+}\frac{\log \N_{r}(C_m)}{-\log r} 
	\leq \varlimsup_{n \to \infty} \frac{\log \N_{n^{-2m}}(C_m)}{-\log ((n-1)^{-2m})}
	\leq \frac{1}{2}.
$$
\end{proof}

\begin{proof}[Proof of Theorem~\ref{THM_CF}]

It directly follows from Lemmas~\ref{lem:2-low-box} and \ref{lem:2-up-box}.
\end{proof}

\section{Proof of Theorem \ref{THM_EGF}: Egyptian fractions}\label{S3}

In this section, we prove the Minkowski dimension result on Egyptian fractions by following the strategy outlined after the statement of Theorem~\ref{THM_EGF}.
Our proof also establishes Theorem~\ref{THM_AS}.
We begin with the proof of lower bound. 

For $\bfa= a_1 \cdots a_k  \in \A_k$ with $k \geq 1$, let
$$
	\xi(\bfa)=\sum_{i=1}^k \frac{1}{a_i}.
$$
The following lemma quantifies the distribution of $E_m$ in $(0, 1/n)$.

\begin{lem}\label{lem:3-low-gap}
Let $m,n \in \mathbb{N}$.
Then for any $x \in (0,1/n)$, there exists $y \in E_m$ such that
$$
	 |x-y| \leq    n^{-2^{m}}  .
$$
\end{lem}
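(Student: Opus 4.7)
The plan is to apply the greedy Egyptian fraction algorithm to $x$ and then either truncate (if it produces at least $m$ digits) or extend via a Sylvester-style telescoping (if it terminates prematurely). First I would record the standard decay estimate. Let $a_1 < a_2 < \cdots$ be the greedy digits of $x$, with remainders $r_0 = x$ and $r_i = r_{i-1} - 1/a_i$; the defining inequality $1/a_i \leq r_{i-1} < 1/(a_i - 1)$ yields $r_i < 1/(a_i(a_i-1))$. Starting from $r_0 < 1/n$ one obtains $a_1 \geq n+1$, and induction then gives $a_i - 1 \geq n^{2^{i-1}}$ and $r_i < n^{-2^i}$ for all $i$ up to the length $\ell \in \mathbb{N} \cup \{\infty\}$ of the expansion.

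If $\ell \geq m$, I would simply take $y := \sum_{i=1}^m 1/a_i$, so that $|x - y| = r_m < n^{-2^m}$. To verify $y \in E_m$, I would check that the greedy algorithm on $y$ recovers exactly $a_1, \ldots, a_m$: for each $1 \leq i \leq m$, the quantity $\sum_{j=i}^m 1/a_j$ lies in $[1/a_i, 1/(a_i - 1))$, as its tail $\sum_{j \geq i+1} 1/a_j \leq r_i(x) < 1/(a_i(a_i - 1))$ is strictly smaller than the gap $1/(a_i - 1) - 1/a_i$.

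If $\ell < m$, then $x = \sum_{i=1}^\ell 1/a_i$ exactly, and I would replace the last unit fraction by a Sylvester-style chain. Setting $N_1 := a_\ell$ and $N_{j+1} := N_j(N_j + 1)$, the identity $1/N_j - 1/N_{j+1} = 1/(N_j + 1)$ gives the telescoping $\sum_{j=1}^k 1/(N_j + 1) = 1/N_1 - 1/N_{k+1}$. Then define
\[
y := \sum_{i=1}^{\ell - 1} \frac{1}{a_i} + \sum_{j=1}^{m - \ell + 1} \frac{1}{N_j + 1},
\]
which has $m$ summands and satisfies $|x - y| = 1/N_{m - \ell + 2}$. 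The quadratic growth $N_{j+1} > N_j^2$, combined with $a_\ell \geq n^{2^{\ell - 1}} + 1$, yields $N_{m - \ell + 2} > n^{2^m}$, which gives the bound.

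The main obstacle is verifying $y \in E_m$ in the degenerate case, i.e., that the greedy expansion of this $y$ has length exactly $m$ with digits $a_1, \ldots, a_{\ell - 1}, N_1 + 1, \ldots, N_{m - \ell + 1} + 1$. For the first $\ell - 1$ greedy steps on $y$, the excess of the remainder above $1/a_i$ equals $r_i(x) - 1/N_{m - \ell + 2}$, which is strictly less than $r_i(x) < 1/(a_i(a_i - 1))$, forcing the greedy choice $a_i$. For the Sylvester-like steps, at step $\ell - 1 + k$ (with $1 \leq k \leq m - \ell + 1$) the remainder equals $1/N_k - 1/N_{m - \ell + 2}$, which sits in $[1/(N_k + 1), 1/N_k)$ because $N_{m - \ell + 2} \geq N_{k+1} = N_k(N_k + 1)$; this forces the greedy choice $N_k + 1$, and after exactly $m$ steps the remainder vanishes, confirming $y \in E_m$.
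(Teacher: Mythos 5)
Your proof follows the same overall strategy as the paper's: run the greedy algorithm on $x$, establish by induction the remainder decay $r_j < n^{-2^j}$ (driven by $a_{j+1} \geq n^{2^j}+1$), and split into the case where the greedy expansion has length at least $m$ (where you truncate to the first $m$ digits) versus length $\ell < m$ (where you must pad to length $m$). In the first case the two arguments are essentially identical; you are just more explicit than the paper in verifying that the truncation is itself greedy. In the degenerate case, however, your construction is genuinely different from the paper's. You replace the final unit fraction $1/a_\ell$ with a Sylvester-style telescoping chain $\sum_{j=1}^{m-\ell+1} 1/(N_j+1)$, where $N_1 = a_\ell$ and $N_{j+1}=N_j(N_j+1)$, giving $y < x$ with $x - y = 1/N_{m-\ell+2}$ exactly; the quadratic blow-up $N_{j+1} > N_j^2$ then yields the bound. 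The paper instead appends large extra terms $a_{k+1}=\max\{a_k^3, n^{2^{m+1}}\}$ and $a_{i}=a_{i-1}^3$ so that $y > x$ with $y-x \leq 2/a_{k+1}$, and asserts that $y \in E_m$ is ``easy to check.'' Both constructions are correct, but yours has a concrete advantage: because each step of your chain satisfies the greedy inequality by an exact telescoping identity, the verification that $y \in E_m$ is a clean, self-contained computation. By contrast, verifying $y\in E_m$ for the paper's appended sequence actually requires a slightly subtle slack estimate (propagating the greedy gap $1/(a_\ell(a_\ell-1)) - r_\ell$ downward to show it always exceeds the size of the appended tail), which is glossed over.
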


\begin{proof}
Fix $x\in (0,1/n)$. Note that the conclusion holds for $n=1$ since $E_m \subset [0,1]$, so we assume $n\geq2$ hereafter. We construct $y$ via the following greedy algorithm. In the first step, let $x_1=x$ and $a_1=\lceil (x_1)^{-1}\rceil$. Suppose we have defined $x_i$ and $a_i$  in the $i$-th step. If $x_i-\frac{1}{a_i}=0$, then we stop. Otherwise, we proceed  to the $(i+1)$-th step and define $x_{i+1}=x_i-\frac{1}{a_i}$ and $a_{i+1}=\lceil (x_{i+1})^{-1}\rceil$. We consider the following two scenarios separately: (C1) The above process stops after the $k$-th step with $k\leq m$; (C2) It does not. 

If (C1) occurs, then the algorithm implies that  
\begin{equation}\label{eqxFinM}
x=\frac{1}{a_1}+\frac{1}{a_2}+\cdots +\frac{1}{a_k},
\end{equation}
where $a_1<a_2< \cdots <a_k$ are positive integers and $k\leq m$. In this case, 
we define $a_{k+1}=\max\{ (a_{k})^3, n^{2^{m+1}} \}$ and $a_i=(a_{i-1})^3$ for any $k+2 \leq i \leq m$.
Let $\bfb = a_1 \cdots a_m$ and $y=\xi(\bfb)$. 
Then it is easy to check that $y \in E_m$. Moreover,
$$
0 \leq y-x=\sum_{i=k+1}^m \frac{1}{a_i}\leq
\frac{2}{a_{k+1}} <\frac{1}{n^{2^m}}.
$$
Thus, the lemma holds for case (C1).

If (C2) occurs, we assert that for each $1\leq j\leq m$, we have 
\begin{equation}\label{eqxmsi}
	0<x -\sum_{i=1}^{j}\frac{1}{a_i} <\frac{1}{n^{2^j}}.
\end{equation}
To see this, first note that since $0<x<1/n$, we have $ a_{1} \geq n+1$, and thus
\begin{equation*}
	0 < x -  \frac{1}{a_1} < \frac{1}{a_1-1}-\frac{1}{a_1} =\frac{1}{(a_1-1)a_1} <\frac{1}{n^2}.
\end{equation*}
Hence \eqref{eqxmsi} holds for $j=1$. Now, suppose \eqref{eqxmsi} holds for some $1\leq j<m$. By the algorithm  and the inductive hypothesis, we have $a_{j+1}=\lceil (x-\sum_{i=1}^{j}(1/a_i))^{-1}\rceil\geq n^{2^j}+1$. It follows that 
$$
	0<x -\sum_{i=1}^{j+1}\frac{1}{a_i} <\frac{1}{n^{2^j}}-\frac{1}{n^{2^j}+1}\leq \frac{1}{n^{2^{j+1}}}.
$$
This establishes  \eqref{eqxmsi} for $j+1$, completing the inductive step and proving our assertion.
The lemma then follows by letting $\bfa =a_1\cdots a_m$ and $y=\xi(\bfa)$.
\end{proof}

We now give the proof of the lower bound part of Theorem \ref{THM_EGF}.

\begin{lem}\label{lem:3-low-box}

For any $m \in \mathbb{N}$, we have
$$
	\lowdimB  E_m \geq  1-2^{-m} .
$$
\end{lem}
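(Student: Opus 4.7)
The plan is to deduce this from Lemma~\ref{lem:3-low-gap} by working with the $\delta$-neighborhood characterization of the lower Minkowski dimension given in \eqref{eq:dim-def}. The key observation is that Lemma~\ref{lem:3-low-gap} says exactly that every point of $(0,1/n)$ lies within distance $n^{-2^m}$ of some point of $E_m$, i.e.
$$(0, 1/n) \subset V_{n^{-2^m}}(E_m),$$
and therefore $\mathcal{L}(V_{n^{-2^m}}(E_m)) \geq 1/n$. Taking $\delta = n^{-2^m}$, we get $\mathcal{L}(V_\delta(E_m)) \geq \delta^{1/2^m}$ along this dyadic scale of parameters.

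To handle arbitrary $\delta \to 0^+$ (not just $\delta$ of the form $n^{-2^m}$), I would interpolate: for every sufficiently small $\delta > 0$, pick the unique integer $n \geq 2$ with $(n+1)^{-2^m} \leq \delta < n^{-2^m}$. Since $V_\delta(E_m) \supseteq V_{(n+1)^{-2^m}}(E_m) \supseteq (0, 1/(n+1))$, one gets $\mathcal{L}(V_\delta(E_m)) \geq 1/(n+1)$. Dividing $\log \mathcal{L}(V_\delta(E_m))$ by the negative quantity $\log \delta$ and using $-\log\delta > 2^m \log n$ yields
$$\frac{\log \mathcal{L}(V_\delta(E_m))}{\log \delta} \leq \frac{\log(n+1)}{2^m \log n},$$
which tends to $2^{-m}$ as $\delta \to 0^+$ (equivalently $n \to \infty$). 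Hence the $\varlimsup$ in the second formula of \eqref{eq:dim-def} is at most $2^{-m}$, and so $\lowdimB E_m \geq 1 - 2^{-m}$.

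The proof is essentially a packaging step; the genuine content has already been absorbed into Lemma~\ref{lem:3-low-gap}. Hence there is no serious obstacle here, only some care needed in the interpolation inequality and in tracking signs when dividing by $\log \delta < 0$. I would write the argument compactly, perhaps three or four lines, emphasizing the chain
$$\mathcal{L}(V_{n^{-2^m}}(E_m)) \geq 1/n \; \Longrightarrow \; \varlimsup_{\delta \to 0^+}\frac{\log \mathcal{L}(V_\delta(E_m))}{\log \delta} \leq \frac{1}{2^m} \; \Longrightarrow \; \lowdimB E_m \geq 1 - \frac{1}{2^m}.$$
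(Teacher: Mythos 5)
Your argument is correct and is exactly the paper's approach: the paper's own proof is a two-line version of this, noting that $V_{n^{-2^m}}(E_m)\supset[0,1/n]$ by Lemma~\ref{lem:3-low-gap} and then invoking \eqref{eq:dim-def}. Your interpolation step for general $\delta$ and the sign bookkeeping when dividing by $\log\delta<0$ are the details the paper leaves implicit, and they check out.
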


\begin{proof}

From Lemma \ref{lem:3-low-gap} we see that $V_{n^{-2^m}}(E_m)\supset [0,\frac{1}{n}]$. Then it follows from \eqref{eq:dim-def} that $\lowdimB E_m\geq 1- 1/2^m$.
\end{proof}

We now prove the upper bound of Theorem~\ref{THM_EGF}.
Recall the inclusion $E_m \subset A_m \subset F_m$, where $F_m$ is defined in \eqref{eq:fm-def}.
The upper bound then follows from the result below.

\begin{lem}\label{lem:3-up-box}
For any $m \in \mathbb{N}$, 
we have 
$\updimB F_m  \leq 1-2^{-m} .$
\end{lem}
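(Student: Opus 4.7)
The plan is to prove Lemma~\ref{lem:3-up-box} by induction on $m$, showing that $\mathcal{L}(V_\delta(F_m)) \leq C_m\,\delta^{2^{-m}}$ for some constant $C_m>0$ and all sufficiently small $\delta>0$; the conclusion then follows from the Lebesgue-measure formulation in \eqref{eq:dim-def}. The base case $m=1$ is elementary: the points $1/n$ with $n \leq \lfloor 1/\sqrt{2\delta}\rfloor$ have pairwise disjoint $\delta$-neighbourhoods of total length $O(\sqrt{\delta})$, and the remaining points of $F$ lie in $[0,\sqrt{2\delta}]$, so $\mathcal{L}(V_\delta(F)) \leq C_1\sqrt{\delta}$.

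The inductive step rests on decomposing $F_m$ according to the \emph{smallest} denominator appearing in a representation. Set $\widetilde{F}^{(a)} := \{1/n : n \geq a\} \cup \{0\}$ and let $\widetilde{F}^{(a)}_{m-1}$ denote its $(m-1)$-fold arithmetic sum. Every element of $F_m$ can be written as $1/a + y$ for some $a \in \mathbb{N}$ and some $y \in \widetilde{F}^{(a)}_{m-1}$, giving
\[
F_m \;\subset\; \{0\} \;\cup\; \bigcup_{a=1}^\infty \Bigl( \tfrac{1}{a} + \widetilde{F}^{(a)}_{m-1}\Bigr).
\]
Two features drive the argument: $\widetilde{F}^{(a)}_{m-1} \subset F_{m-1}$ (so the inductive hypothesis applies to its $\delta$-neighbourhood), and $\widetilde{F}^{(a)}_{m-1} \subset [0,(m-1)/a]$ (so the tail where $a$ is large is confined to a short interval near the origin).

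Given $\delta>0$, introduce the threshold $A := \lceil \delta^{-2^{-m}}\rceil$. For $a \leq A$, translation-invariance of Lebesgue measure together with the inductive hypothesis gives
\[
\mathcal{L}\bigl(V_\delta\bigl(\tfrac{1}{a} + \widetilde{F}^{(a)}_{m-1}\bigr)\bigr) \;\leq\; \mathcal{L}(V_\delta(F_{m-1})) \;\leq\; C_{m-1}\,\delta^{2^{-(m-1)}}.
\]
The union over $a > A$ lies inside $[0,m/A]$ and contributes at most $m/A + 2\delta$. Since $2^{-(m-1)} - 2^{-m} = 2^{-m}$, the choice of $A$ balances the two estimates:
\[
\mathcal{L}(V_\delta(F_m)) \;\leq\; A\,C_{m-1}\,\delta^{2^{-(m-1)}} \;+\; \tfrac{m}{A} \;+\; 2\delta \;\leq\; C_m\,\delta^{2^{-m}},
\]
which closes the induction and yields $\updimB F_m \leq 1 - 2^{-m}$ via \eqref{eq:dim-def}.

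The main obstacle is identifying the correct decomposition. A more symmetric alternative --- writing $F_m = F_{m-1} + F$, splitting $F$ into its $N$ largest entries plus a tail in $[0,1/N]$, and optimising $N$ --- produces a recursion roughly of the form $\beta_m = \beta_{m-1}^2/(1 + \beta_{m-1})$, which is too weak (it gives the exponent $1/6$ rather than $1/4$ when $m=2$). The reason is that after such a split, the ``large-entry'' part yields $N$ translates of the full set $F_{m-1}$ with no regard for their clustering near $0$. Organising the decomposition around the minimum index $a$ instead confines the tail to the short interval $[0,m/A]$, and this is precisely what produces the ``codimension halves'' behaviour $1 - \dimB F_m = (1 - \dimB F)^m$.
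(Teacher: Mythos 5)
Your proof is correct, and it takes a genuinely different route from the paper's. The paper proves the upper bound directly, without induction on $m$: it covers $F_m$ by cylinders $I_{m,n}(\bfa)$ indexed by ``EGF-good words'' $\bfa = a_1\cdots a_k$ satisfying $a_i \leq n^{2^{i-1}}$ for all $i$, counts those words (Lemma~\ref{lem:3-card} gives $\card(G_{k,n}) \leq n^{2^k-1}$), observes that whenever a word first fails the good criterion at position $k+1$ the entire tail $\sum_{i>k}1/a_i$ is confined to an interval of length $m/n^{2^k}$ (Lemma~\ref{lem:3-cover}), and then sums the resulting box counts at scale $n^{-2^m}$. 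You instead induct on $m$ via the Lebesgue-measure reformulation \eqref{eq:dim-def}, splitting $F_m$ by the \emph{smallest} denominator $a$, using translation-invariance and the inductive hypothesis for the finitely many translates $1/a + \widetilde{F}^{(a)}_{m-1}$ with $a \leq A := \lceil \delta^{-2^{-m}}\rceil$, and confining the tail $a > A$ to the short interval $[0, m/A]$. The two proofs exploit the same phenomenon — once denominators get large, the remaining contribution collapses into a short interval near $0$ — but your induction compresses the paper's explicit multi-level cascade $a_i \leq n^{2^{i-1}}$ into a single threshold per level, which is arguably cleaner and makes the recursion $1 - \dimB F_m = (1 - \dimB F)^m$ transparent; the paper's version, by contrast, is a direct computation that also sets up exactly the notation it reuses for Engel fractions in Section~4. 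One minor cosmetic point: in your base case the pairwise disjointness of the $\delta$-neighbourhoods of $1/n$ for $n \leq \lfloor 1/\sqrt{2\delta}\rfloor$ may fail at the very last index, but this is harmless since you only need the upper bound $\mathcal{L} \leq 2N\delta$, which holds by subadditivity regardless of disjointness.
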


The proof of Lemma~\ref{lem:3-up-box} requires some notation and several technical lemmas.
Recall that for $k \in \bN$, $\A_k$ denotes the set of all words of length $k$ over $\bN$.
We write
$$
	D_k=\{ \bfa=a_1 a_2  \cdots a_k \in \A_k :      a_1 \leq a_2 \leq \cdots \leq a_k     \},
$$
and $D_*=\cup_{k=0}^\infty D_k$.
We adopt the convention that $D_0=\{\vartheta\}$ with $\vartheta$ the empty word.

Fix $n \in \mathbb{N}$.
Given $\bfa=a_1 \cdots a_k \in D_k$,
if $a_i  \leq   n^{2^{i-1}}$ for all $1 \leq i \leq k$, 
then we call $\bfa$ an \emph{Egyptian fraction-good word} with respect to $n$, simplified as {\it EGF-good word}. 
Set $G_{0,n}=\{\vartheta\}$ and write
$$
	G_{k,n}=\{ \bfa \in D_k: \bfa \mbox{ is an EGF-good word with respect to } n\}.
$$
The following lemma quantifies the elements of EGF-good words.

\begin{lem}\label{lem:3-card}
For any $k,n \in \mathbb{N}$, we have
$$
	\card( G_{k,n} )\leq n^{2^k-1} ,
$$
where $\card (G)$ represents the cardinality of set $G$.
\end{lem}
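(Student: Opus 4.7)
The plan is to bound $\card(G_{k,n})$ by dropping the non-decreasing restriction and simply multiplying the coordinate-wise range bounds. Since $G_{k,n}\subset D_k$ and every $\bfa = a_1\cdots a_k \in G_{k,n}$ satisfies $a_i \leq n^{2^{i-1}}$ for each $1\leq i \leq k$, the set $G_{k,n}$ is contained in the product
\[
	\prod_{i=1}^{k}\{1,2,\ldots, n^{2^{i-1}}\}.
\]
Taking cardinalities gives
\[
	\card(G_{k,n}) \leq \prod_{i=1}^{k} n^{2^{i-1}} = n^{\sum_{i=1}^{k} 2^{i-1}} = n^{2^{k}-1},
\]
which is exactly the claimed bound. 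No combinatorics beyond a geometric series identity is needed.

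For expository clarity I would present this as a one-step induction on $k$: the base case $k=1$ is immediate since an EGF-good word of length $1$ is any $a_1 \in \{1,\ldots,n\}$, so $\card(G_{1,n}) \leq n = n^{2^{1}-1}$. For the inductive step, note that each $\bfa \in G_{k,n}$ is obtained by appending some $a_k$ with $a_{k-1}\leq a_k \leq n^{2^{k-1}}$ to a word in $G_{k-1,n}$ (the appended letter having at most $n^{2^{k-1}}$ choices), whence $\card(G_{k,n}) \leq \card(G_{k-1,n}) \cdot n^{2^{k-1}} \leq n^{2^{k-1}-1}\cdot n^{2^{k-1}} = n^{2^{k}-1}$.

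There is no real obstacle here: the statement is a crude counting bound in which the non-decreasing condition is discarded, and the exponent $2^{k}-1$ comes from the telescoping sum $\sum_{i=1}^{k} 2^{i-1}$. The reason the bound is sharp enough for the subsequent application is presumably that the constraints $a_i \leq n^{2^{i-1}}$ grow doubly-exponentially, so losing the non-decreasing restriction costs only a constant factor that will be absorbed into the Minkowski dimension computation in Lemma~\ref{lem:3-up-box}.
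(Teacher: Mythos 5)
Your proof is correct and follows exactly the paper's argument: drop the non-decreasing condition, bound $G_{k,n}$ by the Cartesian product $\prod_{i=1}^k\{1,\ldots,n^{2^{i-1}}\}$, and sum the geometric series in the exponent. The optional inductive rephrasing you offer is equivalent and adds nothing essential.
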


\begin{proof}
For any $\bfa= a_1 \cdots a_k  \in G_{k,n}$, we have $1 \leq a_i \leq {n}^{2^{i-1}}$ for all $1 \leq i \leq k$.
Thus,
$$
	\card(G_{k,n}) 
	\leq \prod_{i=1}^k  n^{2^{i-1}}=n^{ \sum_{i=1}^k {2^{i-1}}}=n^{2^k-1}.
$$
\end{proof}

For any $k,n,m \in \mathbb{N}$ and EGF-good word $\bfa= a_1 \cdots a_k  \in G_{k,n}$, 
write
$$
	I_{m,n}(\bfa):=[\xi(\bfa),\xi(\bfa)+ m \cdot n^{-2^{k}}  ].
$$
We set $I_{m,n}(\vartheta)=[0,m/n]$.
The following lemma indicates that the cylinders constructed by EGF-good words give a cover of $F_m$.

\begin{lem}\label{lem:3-cover}
For any $n,m \in \mathbb{N}$, we have
$$
	F_m \subset \bigcup_{k=0}^m  \bigcup\limits_{\bfa \in G_{k,n}} I_{m,n}(\bfa)  .
$$
\end{lem}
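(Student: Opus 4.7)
The plan is to take an arbitrary $x \in F_m$, expand it as $x=\sum_{i=1}^m 1/b_i$ with $b_i \in \mathbb{N} \cup \{\infty\}$ (interpreting $1/\infty=0$ so that the constant term of $F$ is accommodated), reorder the $b_i$ so that $b_1 \leq b_2 \leq \cdots \leq b_m$, and then truncate this sequence at the longest prefix that qualifies as EGF-good.

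Concretely, I would let $k$ be the largest integer in $\{0,1,\ldots,m\}$ such that $b_i \leq n^{2^{i-1}}$ holds for every $1 \leq i \leq k$, and set $\bfa = b_1\cdots b_k$ (with $\bfa = \vartheta$ when $k=0$). By construction $\bfa \in G_{k,n}$. The point is to show that $x$ lies in $I_{m,n}(\bfa)$, which I would split into three cases. If $k = 0$, then $b_1 > n$ (or all $b_i = \infty$, in which case $x = 0$), and hence $x \leq m/b_1 < m/n$, placing $x$ in $I_{m,n}(\vartheta) = [0, m/n]$. If $0 < k < m$, then by maximality of $k$ the next term satisfies $b_{k+1} > n^{2^k}$ (or equals $\infty$), so each of the remaining $m-k$ summands is strictly less than $n^{-2^k}$, and thus
\[
0 \;\leq\; x - \xi(\bfa) \;=\; \sum_{i=k+1}^m \frac{1}{b_i} \;<\; \frac{m-k}{n^{2^k}} \;\leq\; \frac{m}{n^{2^k}},
\]
which gives $x \in [\xi(\bfa), \xi(\bfa) + m \cdot n^{-2^k}] = I_{m,n}(\bfa)$. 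Finally, if $k = m$, then $x = \xi(\bfa)$ belongs to $I_{m,n}(\bfa)$ trivially.

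I do not anticipate a serious obstacle. The only delicate point is bookkeeping for the terms equal to $0$ (i.e.\ the $1/\infty$ entries in the representation), since $G_{k,n}$ consists only of finite positive words. Handling this cleanly amounts to pushing the zero terms to the end of the ordered sequence and observing that once a zero term appears, the maximality condition is still satisfied in a consistent way (either $k$ stops at the last finite index, making $x = \xi(\bfa)$, or $k$ stops earlier, in which case $b_{k+1}$ is either $\infty$ or exceeds $n^{2^k}$, and both options give the same tail bound). Once this nuance is absorbed, the three-case analysis outlined above directly yields the claimed covering.
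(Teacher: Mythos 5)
Your proof is correct and follows essentially the same route as the paper: take the non-decreasing representation of $x$, find the longest prefix that is EGF-good, and bound the tail by $m\cdot n^{-2^k}$ using monotonicity of the $b_i$. The paper simply discards the zero summands at the outset, writing $x=\xi(\bfa)$ for a word $\bfa\in D_p$ of length $p\le m$, which sidesteps the $1/\infty$ bookkeeping you handle explicitly.
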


\begin{proof}
Fix $n,m \in \mathbb{N}$. Let $x \in F_m$.
Then there exists $\bfa \in D_p$ with $p \leq m$ such that $x=\xi(\bfa)$.
Clearly $x\in I_{m,n}(\mathbf{a})$ if $\mathbf{a}$ is EGF-good with respect to $n$.

If $\bfa$ is not EGF-good with respect to $n$, let $0 \leq k < m$ such that 
$ a_i \leq n^{2^{i-1}}$ for all $i \leq k$ and $a_{k+1} >n^{2^k}$.
Let $\bfb=a_1 \cdots a_k$ and $y=\xi(\bfb)$.
Then $\bfb \in G_{k,n}$ and
$$
	0 < x-y =\sum_{i=k+1}^p \frac{1}{a_i} \leq \frac{p-k}{a_{k+1}} < \frac{m}{n^{2^{k}}},
$$
which implies that $ x \in I_{m,n}(\bfb) $.
The result holds by the arbitrariness of $x$.
\end{proof}

Now we are ready to prove Lemma~\ref{lem:3-up-box}.

\begin{proof}[Proof of Lemma~\ref{lem:3-up-box}]
Fix $m \in \mathbb{N}$. Let $n\geq 2$ be an integer. Note that for any $0 \leq k \leq m $ and $\bfa \in G_{k,n}$, we have
$$
\N_{n^{-2^m}} \big(I_{m,n}(\bfa)\big)\leq \frac{|I_{m,n}(\bfa)|}{n^{-2^m}}=\frac{m \cdot n^{-2^k}}{n^{-2^m}}=m \cdot n^{2^m-2^k}.
$$
This combining with Lemma~\ref{lem:3-card} yields that
$$
\sum\limits_{\bfa \in G_{k,n}}   \N_{n^{-2^m}}\big(I_{m,n}(\bfa)\big)
\leq  \card(G_{k,n}) \cdot m \cdot n^{2^m-2^k}= m \cdot n^{2^m-1}.
$$
Then by Lemma~\ref{lem:3-cover},
\begin{equation}\label{eqCoverFm}
	\N_{n^{-2^m}} (F_m) 
	 \leq \sum_{k=0}^m  \sum\limits_{\bfa \in G_{k,n}}   \N_{r_n}\big(I_{m,n}(\bfa)\big)
	\leq  \sum_{k=0}^m m \cdot n^{2^m-1}
	= m(m+1)  n^{2^m-1}.
\end{equation}

For $r\in (0,1)$, let $n\geq 2$ such that $n^{-2^m}\leq r<(n-1)^{-2^m}$.  Then it follows from \eqref{eqCoverFm} that 
$$
\updimB F_m  =\varlimsup_{r \to 0^+} \frac{\log \N_{r}(F_m)}{-\log r}
\leq \varlimsup_{n \to \infty} \frac{\log \N_{r_{n}}(F_m)}{-\log( (n-1)^{-2^m})}=\frac{2^m-1}{2^m}.
$$
This completes the proof.
\end{proof}
\begin{proof}[Proof of Theorems~\ref{THM_EGF}-\ref{THM_AS}]
These results follow from Lemmas \ref{lem:3-low-box}-\ref{lem:3-up-box} and the inclusion $E_m \subset A_m \subset F_m$.
\end{proof}

\section{Proof of Theorem \ref{THM_ENF}: Engel fractions}\label{S4}

In this section, we establish the Minkowski dimension results for Engel fractions. 
We begin with the proof of lower bound. For $\bfa=a_1 \cdots a_k \in D_k$ with $k \geq 1$, we define 
$$
	f(\bfa  ) := \frac{1}{a_1}+\frac{1}{a_1 a_2}+ \cdots + \frac{1}{a_1 a_2 \cdots a_k}.
$$
The following lemma quantifies the distribution of $E_m^*$ in $(0, 1/n)$.

\begin{lem}\label{lem:4-low-gap}
Let $m,n \in \bN$ and $x \in (0,1/n)$.
Then there exists $y \in E_m^*$ such that
\begin{equation}\label{eq:gap-low-4-1}
	| x-y | \leq n^{-m-1}.
\end{equation}
\end{lem}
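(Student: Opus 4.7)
The plan is to mirror the greedy construction from Lemma~\ref{lem:3-low-gap}, adapted to the Engel algorithm. I would set $x_1=x$ and recursively define $a_j=\lceil 1/x_j\rceil$ and $x_{j+1}=a_jx_j-1$. The standard inclusion $x_j\in[1/a_j,1/(a_j-1))$ gives $x_{j+1}\in[0,1/(a_j-1))$, so $a_{j+1}\geq a_j$; and since $x<1/n$ forces $a_1\geq n+1$, one obtains $a_j\geq n+1$ for every index produced by the algorithm.

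I would then split into two cases. In \emph{Case 1}, the algorithm does not terminate within $m$ steps (i.e.\ $x_{m+1}>0$); I would take $y=f(a_1a_2\cdots a_m)$, so that $x-y=x_{m+1}/(a_1\cdots a_m)$, and combine $x_{m+1}<1/(a_m-1)\leq 1/n$ with $a_1\cdots a_m\geq(n+1)^m$ to conclude $|x-y|<1/(n(n+1)^m)\leq n^{-m-1}$. In \emph{Case 2}, the algorithm terminates at some step $k\leq m$, so $x=f(a_1\cdots a_k)$. If $k=m$, take $y=x$; otherwise pick an integer $N\geq\max\{a_k,2n^{m+1}\}$, set $c_i=a_i$ for $1\leq i\leq k$ and $c_i=N$ for $k<i\leq m$, and put $y=f(c_1\cdots c_m)$. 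A geometric-series estimate
\[
|y-x|=\sum_{i=1}^{m-k}\frac{1}{a_1\cdots a_k\cdot N^i}<\frac{1}{(a_1\cdots a_k)(N-1)},
\]
together with $a_1\cdots a_k\geq n+1$ and $N-1\geq n^{m+1}$, then yields the required bound.

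The one point that really needs verification is that the constructed $y$ actually lies in $E_m^*$, i.e.\ that $(a_1,\ldots,a_m)$ in Case~1 (respectively $(c_1,\ldots,c_m)$ in Case~2) is the Engel expansion of $y$ of length \emph{exactly} $m$. For this I would first establish a short uniqueness lemma: whenever $y=\sum_{i=1}^m 1/(c_1\cdots c_i)$ with $2\leq c_1\leq c_2\leq\cdots\leq c_m$, the sandwich
\[
\frac{1}{c_1}<y\leq\sum_{i=1}^m\frac{1}{c_1^i}<\frac{1}{c_1-1}
\]
forces $\lceil 1/y\rceil=c_1$, and induction on $m$ applied to $c_1y-1=\sum_{i=2}^m 1/(c_2\cdots c_i)$ finishes the argument. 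This uniqueness step is the main obstacle; the rest is a routine calculation, once the non-decreasing condition $N\geq a_k$ is enforced in Case~2.
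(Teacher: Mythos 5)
Your proposal is correct and follows essentially the same strategy as the paper: run the Engel greedy algorithm on $x$, split on whether it terminates within $m$ steps, pad the terminating case with large repeated digits, and bound the residual in the non-terminating case. The only real differences are bookkeeping: in the non-terminating case you use the exact identity $x-f(a_1\cdots a_m)=x_{m+1}/(a_1\cdots a_m)$ together with $x_{m+1}<1/n$ and $a_1\cdots a_m\geq(n+1)^m$, which is a cleaner one-shot bound than the paper's inductive estimate $0<x-\sum_{i\le j}1/(a_1\cdots a_i)<n^{-(j+1)}$; and in the terminating case you use a geometric-series tail estimate where the paper uses the cruder $(m-k)/a_{k+1}$ bound. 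Your explicit uniqueness lemma verifying that $(c_1,\ldots,c_m)$ really is the Engel expansion of the constructed $y$ (so that $y\in E_m^*$ with length exactly $m$) is a step the paper states without proof, and spelling it out is a genuine improvement in rigor.
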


\begin{proof}
The proof is similar to that of Lemma \ref{lem:3-low-gap}; we only highlight the essential modifications.  We conduct on $x$ the following algorithm. In the first step, let $x_1=x$ and $a_1=\lceil (x_1)^{-1}\rceil$. Suppose $x_i$ and $a_i$ have been defined  in the $i$-th step. If $a_ix_i-1=0$, then we stop. Otherwise, we proceed  to the $(i+1)$-th step by defining $x_{i+1}=a_ix_i-1$ and $a_{i+1}=\lceil (x_{i+1})^{-1}\rceil$. We again consider  two scenarios: (C1) The above process stops after the $k$-th step with $k\leq m$; (C2) It does not. 

For case (C1), we replace \eqref{eqxFinM} by  
$$
	x=\sum_{i=1}^{k}\frac{1}{a_1\cdots a_i},
$$
where $a_1\leq \cdots \leq a_k$ and  $k\leq m$. We define $a_{k+1}=\max\{a_{k}, mn^{m+1} \}$  for  $k+1 \leq i \leq m$ and let   $y=f(\bfb)$, where $\bfb = a_1 \cdots a_m$. 
Then $y \in E_m^*$, and
$$
0 \leq y-x=\sum_{i=k+1}^m \frac{1}{a_1\cdots a_i}\leq \frac{m-k}{a_1\cdots a_ka_{k+1}}\leq \frac{m}{mn^{m+1}} =\frac{1}{n^{m+1}}.
$$
This proves the lemma for case (C1).

As for case (C2), we assert that for each $1\leq j\leq m$, 
\begin{equation}\label{eqxmsi}
	0<x -\sum_{i=1}^{j}\frac{1}{a_1\cdots a_i} <\frac{1}{n^{j+1}}.
\end{equation}
This is again proved by induction on $j$. A key difference is that in the step of induction, we have 
$a_{j+1}=\lceil (a_1\cdots a_j(x-\sum_{i=1}^{j}\frac{1}{a_1\cdots a_i}))^{-1}\rceil>\frac{n^{j+1}}{a_1\cdots a_j}$, which yields that 
$$0<x -\sum_{i=1}^{j+1}\frac{1}{a_1\cdots a_i} <\frac{1}{n^{j+1}}-\frac{1}{n^{j+1}+1}\leq \frac{1}{n^{j+2}}.$$
This completes the proof.
\end{proof}

Based on Lemma~\ref{lem:4-low-gap}, a similar argument as in Lemma~\ref{lem:3-low-box} yields the following.

\begin{lem}\label{lem:4-low-box}
For any $m \in \mathbb{N}$, we have
$
	\lowdimB E_m^* \geq  \frac{m}{m+1} .
$
\end{lem}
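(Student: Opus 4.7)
The strategy is to mimic the proof of Lemma~\ref{lem:3-low-box}, transferring the gap estimate of Lemma~\ref{lem:4-low-gap} into a lower bound on the Lebesgue measure of $\delta$-neighbourhoods of $E_m^*$, and then invoking the characterisation \eqref{eq:dim-def} of the lower Minkowski dimension.

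First, I would use Lemma~\ref{lem:4-low-gap} directly: for every $n \in \mathbb{N}$ and every $x \in (0,1/n)$ there exists $y \in E_m^*$ with $|x-y|\leq n^{-m-1}$. This means
$$
	V_{n^{-m-1}}(E_m^*) \supset [0,1/n], \qquad \text{hence} \qquad \mathcal{L}\bigl( V_{n^{-m-1}}(E_m^*) \bigr) \geq \frac{1}{n} .
$$

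Next, I would feed this into the formula \eqref{eq:dim-def}. For an arbitrary $\delta \in (0,1)$, choose $n \in \mathbb{N}$ with $n^{-m-1} \leq \delta < (n-1)^{-(m+1)}$. Monotonicity of $\delta$-neighbourhoods gives $\mathcal{L}(V_\delta(E_m^*)) \geq \mathcal{L}(V_{n^{-m-1}}(E_m^*)) \geq 1/n$, so writing both quantities as negatives of their absolute values,
$$
	\frac{\log \mathcal{L}(V_\delta(E_m^*))}{\log \delta}
	= \frac{-\log \mathcal{L}(V_\delta(E_m^*))}{-\log \delta}
	\leq \frac{\log n}{(m+1)\log(n-1)} .
$$
As $\delta \to 0^+$, one has $n \to \infty$ and the right-hand side tends to $\frac{1}{m+1}$. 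Consequently $\varlimsup_{\delta \to 0^+} \frac{\log \mathcal{L}(V_\delta(E_m^*))}{\log \delta} \leq \frac{1}{m+1}$, and by \eqref{eq:dim-def},
$$
	\lowdimB E_m^* \geq 1-\frac{1}{m+1}=\frac{m}{m+1}.
$$

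There is no real obstacle here; all the work has already been done in Lemma~\ref{lem:4-low-gap}. The only mildly delicate point is making sure the inequality for the ratio of negative logarithms is handled correctly (keeping track of signs when dividing by $\log\delta<0$) and interpolating from the discrete sequence $\delta_n = n^{-m-1}$ to all $\delta \to 0^+$, which is handled by the monotonicity observation above.
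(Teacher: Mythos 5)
Your proposal is correct and follows exactly the approach the paper uses: it deduces $V_{n^{-m-1}}(E_m^*)\supset[0,1/n]$ from Lemma~\ref{lem:4-low-gap} and then applies the measure-based characterisation~\eqref{eq:dim-def} of the lower Minkowski dimension, which is precisely the one-line argument of Lemma~\ref{lem:3-low-box} that the paper says to imitate. The only difference is that you make explicit the interpolation from the discrete scales $\delta_n=n^{-m-1}$ to all $\delta\to0^+$, which the paper leaves implicit; this is done correctly, with the sign bookkeeping for $\log\delta<0$ handled properly.
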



Now we turn to the proof of the upper bound. We apply an idea similar to that in Section \ref{S2}. 
Given $0 \leq k\leq m$ and $\bfa=a_1 \cdots a_k \in D_k$,
we denote by
$$
	S_{m,\bfa}=\{  \bfb =b_1 \cdots b_p \in D_p : k\leq p\leq m, \, b_i=a_i  \mbox{ for all } 1 \leq i \leq k     \}
$$
the set of all the words in $D_*$ of length at most $m$ having  $\bfa$ as a prefix. Write
$$
	X_{m,\bfa}
	=\{ f(\bfb): \bfb \in S_{m,\bfa}  \}.
$$
Let $n \in \bN$. 
We say $\bfa$ is an {\it Engel fraction-good word (ENF-good word for short)} with respect to $(m,n)$ if
$$
	\pi(\bfa)\times (a_k)^{m-k+1}  \leq n^{m+1}.
$$ 
In this case, we define 
$$
	\gamma_{m,n}(\bfa):= \Big( \frac{n^{m+1}}{\pi(\bfa)}  \Big)^{\frac{1}{m-k+1}}.
$$
Then  for any integer $\ell\geq a_k$, the word $\bfa \ell$ is ENF-good with respect to $(m,n)$ if and only if $ \ell \leq \gamma_{m,n} (\bfa)$. The following lemma provides the covering estimate of $X_{m,\mathbf{a}}$, whose proof is similar to that of Lemma \ref{lem:star-2}.

\begin{lem}\label{lem:key-4}
Let $m ,n\in \bN$. For any ENG-good word $\bfa $ with respect to $(m,n)$ of length $m-k$, where $0\leq k\leq m$, we have
\begin{equation}\label{eq:count-up-4-5}
	\N_{n^{-m-1}}(X_{m,\bfa}) \leq \big((3k+1) {\gamma_{m,n}(\bfa)}\big)^k.
\end{equation}
\end{lem}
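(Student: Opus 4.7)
The plan is to mirror the inductive argument used in Lemma~\ref{lem:star-2}, replacing the continued-fraction gap estimates with Engel analogues. I will proceed by induction on $k$. The base case $k=0$ is immediate: here $\bfa$ has length $m$, so $S_{m,\bfa}=\{\bfa\}$ and $X_{m,\bfa}$ is a singleton.

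For the inductive step, fix an ENF-good word $\bfa=a_1\cdots a_{m-k}$ with $k\geq 1$, set $\gamma=\gamma_{m,n}(\bfa)$, and use the natural decomposition
$$
X_{m,\bfa}=\{f(\bfa)\}\cup \bigcup_{\ell\geq a_{m-k}} X_{m,\bfa\ell},
$$
partitioning the $\ell$-range at $\gamma$. For $\ell>\gamma$, the monotonicity of letters in $D_*$ gives, for every $\bfb=\bfa\ell c_1\cdots c_q\in S_{m,\bfa\ell}$ (so that $c_j\geq \ell$), the geometric-series estimate
$$
0\leq f(\bfb)-f(\bfa) \leq \frac{1}{\pi(\bfa)}\sum_{j\geq 0}\frac{1}{\ell^{j+1}}\leq \frac{2}{\pi(\bfa)\gamma}=\frac{2\gamma^{k}}{n^{m+1}}.
$$
Hence $\{f(\bfa)\}\cup\bigcup_{\ell>\gamma}X_{m,\bfa\ell}$ sits in an interval of length $\leq 2\gamma^k n^{-m-1}$ and admits an $n^{-m-1}$-cover of size at most $2\gamma^k+1$. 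For $a_{m-k}\leq \ell\leq\gamma$, the extension $\bfa\ell$ is itself an ENF-good word with respect to $(m,n)$ of length $m-(k-1)$, so the inductive hypothesis applies. Using the identity $\gamma_{m,n}(\bfa\ell)^{k-1}=\gamma^{(k^2-1)/k}\ell^{-(k-1)/k}$ together with the integral-comparison bound $\sum_{\ell\leq\gamma}\ell^{-(k-1)/k}\leq k\gamma^{1/k}$, the total contribution from this regime is at most $k(3k-2)^{k-1}\gamma^k$. Adding the two contributions and invoking the elementary polynomial inequality $2+k(3k-2)^{k-1}+\gamma^{-k}\leq (3k+1)^k$, which is automatic for $\gamma\geq 1$ and essentially tight when $k=\gamma=1$, yields the claimed bound $((3k+1)\gamma)^k$.

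The main obstacle I anticipate is getting the correct covering estimate in the large-$\ell$ regime: one must exploit the non-decreasing structure of the Engel expansion to collapse the contributions of all $\ell>\gamma$ into a single short interval about $f(\bfa)$, so that this part contributes only $O(\gamma^k)$ rather than a much larger sum taken separately over each $\ell$. A secondary delicacy is keeping the constant $3k+1$ sharp enough to close the induction; a looser large-$\ell$ gap estimate (for instance, replacing the factor $2$ above with a larger constant) would force a larger leading coefficient and break the bootstrap.
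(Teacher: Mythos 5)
Your proof is correct and follows essentially the same inductive scheme as the paper: the same split of the $\ell$-range at $\gamma$, the same integral-comparison bound $\sum_{\ell\le\gamma}\ell^{-(k-1)/k}\le k\gamma^{1/k}$ in the small-$\ell$ regime, and the same geometric-series collapse of the large-$\ell$ regime into a single interval of length $O(\gamma^k n^{-m-1})$. The only cosmetic difference is that the paper absorbs the factor $k$ into $(3k-2)^k$ before combining, while you retain it and check the resulting numerical inequality directly.
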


\begin{proof} 
We argue by induction on $k$. The conclusion clearly holds when $k=0$, since in this case $X_{m,\bfa}=\{f(\bfa)\}$ is a singleton. Suppose that \eqref{eq:count-up-4-5} holds for $ k=k_0-1$, where $1\leq k_0\leq m$, below we  prove \eqref{eq:count-up-4-5} for $k=k_0$. Let $\bfa$ be an ENG-good word of length $m-k_0$ and set  
$$
	\gamma= \gamma_{m,n}(\bfa)  = \Big( \frac{n^{m+1}}{\pi(\bfa) }\Big)^{\frac{1}{k_0+1}}.
$$ 
Notice that 
\begin{equation}\label{eq:4-tmp}
	X_{m,\bfa}=\bigcup_{t = a_{m-k_0}}^\infty X_{m,\bfa t}=\left(\bigcup_{t = a_{m-k_0}}^{\lfloor\gamma\rfloor} X_{m,\bfa t}\right) \bigcup\left(\bigcup_{t\geq \lfloor\gamma\rfloor+1}X_{m,\bfa t}\right):=A\cup B.
\end{equation}
For any $ a_{m-k_0}\leq t \leq \lfloor\gamma\rfloor$, $\bfa t$ is an ENG-good word with respect to $(m,n)$ of length $m-(k_0-1)$. Hence by the  induction hypothesis, we have  
\begin{equation}\label{eq:count-up-4-7}
	\N_{n^{-m-1}}(X_{m,\bfa t}) \leq \big ((3k_0-2) {\gamma_{m,n}(\bfa t)}\big)^{k_0-1}
	 =(3k_0-2)^{k_0-1} \Big( \frac{\gamma^{k_0+1}}{  t}\Big)^{\frac{k_0-1}{k_0}}.
\end{equation}
Note that 
\begin{equation}\label{eq:count-up-4-8}
 \sum_{t= 1}^{\lfloor\gamma\rfloor} \Big( \frac{1}{  t}\Big)^{\frac{k_0-1}{k_0}} \leq  1+ \int_{1}^\gamma  \frac{1}{x^{(k_0-1)/k_0}}\,dx= k_0 \gamma^{\frac{1}{k_0}}. 
\end{equation}
Combining \eqref{eq:count-up-4-7} and \eqref{eq:count-up-4-8}, we see that
\begin{align*}
\N_{n^{-m-1}}(A)\leq \sum_{t= a_{m-k_0}}^{\lfloor\gamma\rfloor}  \N_{n^{-m-1}}(X_{m,\bfa t}) 
	 \leq   (3k_0-2)^{k_0-1} \gamma ^{\frac{k_0^2-1}{k_0}} \times k_0 \gamma^{\frac{1}{k_0}}
\leq  (3k_0-2)^{k_0}\gamma ^{k_0} .
\end{align*}

On the other hand, since $\gamma\geq1$, we have $\overline{\gamma}:=\lfloor \gamma\rfloor+1\geq2$. Therefore,
$$
	B\subset \left[g(\bfa),g(\bfa)+\sum_{j=1}^{k_0}\frac{1}{\pi(\bfa)\overline{\gamma}^j}\right]  
	\subset \left[g(\bfa),g(\bfa)+ \frac{2}{\pi(\bfa)   \gamma} \right]
	=\left[g(\bfa),g(\bfa)+ \frac{2\gamma^{k_0}}{n^{m+1} } \right].
$$
Hence $\N_{n^{-m-1}}(B)\leq 2\gamma^{k_0}+1$. It then follows from 
 \eqref{eq:4-tmp} that
\begin{align*}
	 \N_{n^{-m-1}}(X_{m,\bfa})
	 \leq  (3k_0-2)^{k_0}\gamma^{k_0} + 2\gamma^{k_0} +1
	 \leq ((3k_0+1)\gamma)^{k_0} .
\end{align*}
This finishes the induction and hence the proof of the lemma.
\end{proof}

Recall that $\vartheta$ represents the empty word. Notice that for $m,n\in\bN$, we have  $A_m^* =X_{m,\vartheta}\cap(0,1]$ and $\gamma_{m,n}(\vartheta)=n$. Hence by Lemma \ref{lem:key-4} (in which we take $k=m$),
$$
	\N_{n^{-m-1}}(A_m^*)\leq \N_{n^{-m-1}}(X_{m,\vartheta}) 
	\leq \big((3m+1) {\gamma_{m,n}(\vartheta)}\big)^m
	=(3m+1)^m  n ^m.
$$

This, together with an argument as in the proof of Lemma \ref{lem:2-up-box} immediately yields the following.  
\begin{lem}\label{lem:4-up-box}
For any $m \geq 1$, we have
$
	\updimB A_m^* \leq  \frac{m}{m+1}.
$
\end{lem}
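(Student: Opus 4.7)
The plan is to leverage the covering estimate
$$\N_{n^{-m-1}}(A_m^*) \leq (3m+1)^m n^m$$
that was derived immediately before the lemma statement by applying Lemma \ref{lem:key-4} to $\bfa = \vartheta$ with $k = m$ (using $\gamma_{m,n}(\vartheta) = n$ and $A_m^* \subset X_{m,\vartheta}$). Converting this covering bound at the discrete scales $r_n = n^{-m-1}$ into an upper Minkowski dimension bound is a standard interpolation, completely parallel to the end of the proof of Lemma \ref{lem:2-up-box}; once that interpolation is executed, the $\frac{m}{m+1}$ falls out of the exponents of $n$ in numerator and denominator.

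Concretely, for any $r \in (0,1)$, I would choose the unique integer $n \geq 2$ with $n^{-m-1} \leq r < (n-1)^{-m-1}$. Because the covering number is monotone in the scale, $\N_r(A_m^*) \leq \N_{n^{-m-1}}(A_m^*) \leq (3m+1)^m n^m$, so
$$\frac{\log \N_r(A_m^*)}{-\log r} \;\leq\; \frac{m \log(3m+1) + m \log n}{(m+1) \log(n-1)}.$$
As $r \to 0^+$ we have $n \to \infty$, and the right-hand side tends to $\frac{m}{m+1}$. Taking the limsup gives $\updimB A_m^* \leq \frac{m}{m+1}$, which is the desired bound.

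I do not anticipate any real obstacle at this step, since the substantive analytic work, namely the induction on the depth parameter $k$ producing the geometric-style estimate in Lemma \ref{lem:key-4}, has already been done. The only things to verify are the monotonicity $\N_r \leq \N_{n^{-m-1}}$ for $r \geq n^{-m-1}$ and the asymptotic $\log(n-1)/\log n \to 1$, both of which are routine. The constant $(3m+1)^m$ is harmless because it does not depend on $n$, so its logarithm is absorbed into the lower-order term of the numerator.
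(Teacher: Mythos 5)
Your proof is correct and follows exactly the same route as the paper: it invokes the bound $\N_{n^{-m-1}}(A_m^*)\leq (3m+1)^m n^m$ obtained from Lemma~\ref{lem:key-4} applied to $\vartheta$ with $k=m$, and then interpolates over general scales $r$ precisely as in the proof of Lemma~\ref{lem:2-up-box}, which is what the paper means by ``an argument as in the proof of Lemma~\ref{lem:2-up-box}.'' You have simply written out the interpolation step that the paper leaves implicit.
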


\begin{proof}[Proof of Theorem~\ref{THM_ENF}]
Since $E_m^*\subset A_m^*$ for $m\in\bN$, the conclusion directly follows from Lemmas \ref{lem:4-low-box} and \ref{lem:4-up-box}.
\end{proof}

\subsection*{Acknowledgements.} 
\medskip
The authors would like to thank Yuanyang Chang, Lulu Fang, Yanqi Qiu, Huojun Ruan, Baowei Wang for helpful discussions. H. P. Chen was financially supported by NSFC 12401107. Y. F. Wu was supported by Natural Science Foundation of Hunan Province 2023JJ40700.

\bibliographystyle{amsplain}

\end{document}